\documentclass[11pt]{article}
\usepackage[margin=1.1in]{geometry}
\usepackage{amsmath,amssymb,amsthm,mathtools}
\usepackage{enumitem}
\usepackage{hyperref}
\usepackage[nameinlink,capitalize,noabbrev]{cleveref}
\usepackage{microtype}

\hypersetup{colorlinks=true,linkcolor=blue,citecolor=blue,urlcolor=blue}

\newtheorem{theorem}{Theorem}[section]

\newtheorem{lemma}[theorem]{Lemma}
\newtheorem{corollary}[theorem]{Corollary}

\newtheorem{remark}[theorem]{Remark}

\crefname{theorem}{Theorem}{Theorems}
\crefname{proposition}{Proposition}{Propositions}
\crefname{lemma}{Lemma}{Lemmas}
\crefname{corollary}{Corollary}{Corollaries}
\crefname{definition}{Definition}{Definitions}
\crefname{remark}{Remark}{Remarks}

\DeclareMathOperator{\Prob}{\mathbb P}
\DeclareMathOperator{\E}{\mathbb E}
\newcommand{\Ind}{\mathbf 1}
\newcommand{\dd}{\mathrm d}

\title{The Ballot Event for Two-Player Coupon Collection: A Renewal--Catalan Asymptotic}
\author{Christopher D. Long\\
Headlamp Software\\
\texttt{galizur@gmail.com}}
\date{}

\begin{document}
\maketitle

\begin{abstract}
We study the two-player coupon-collector competition in which two independent collectors draw one coupon each per round from a set of $d$ equally likely coupon types.  Myers and Wilf gave finite formulae for several two-player events and explicitly left open the ballot-type problem of finding the probability that the ultimate winner was never behind.  We prove that this probability satisfies
\[
        b_d \sim \frac{2}{d}, \qquad d\to\infty .
\]
The proof uses a renewal decomposition at the tie boundary.  The first one-sided tie-break has an explicit entrance distribution; its level, scaled by $d^{1/2}$, converges to a Rayleigh law; and, after the break, the leader's survival probability is governed by a Catalan, or gambler's-ruin, harmonic.  The main estimate shows that the accumulated defect of this comparison harmonic in the exact simultaneous-round chain is negligible.
\end{abstract}

\section{Introduction}

The classical coupon collector asks for the time required until a single collector has seen all $d$ coupon types.  Myers and Wilf \cite{MW} studied several refined variants, including two simultaneous collectors.  In their two-player model, in every round collector $A$ and collector $B$ independently draw one coupon from $\{1,\dots,d\}$, uniformly and with replacement.  They solved, among other things, the event that the two collectors complete their collections in the same round, and a related one-sided problem in which the first player to pull ahead never loses the lead.  They also considered the more natural ballot event: the ultimate winner was never behind.  That latter problem was left unresolved; Myers and Wilf described a decomposition into tails, frames, and ribbons, but no evaluation of this probability was obtained \cite[Sec.~2.9]{MW}.

The purpose of this note is to give an asymptotic solution of the Myers--Wilf winner-never-behind problem.  We do not attempt to produce an exact finite formula for each fixed value of $d$.

\begin{theorem}[Winner-never-behind asymptotic]\label{thm:main}
Let $b_d$ be the probability that, in the simultaneous two-player coupon-collector competition with $d$ coupon types, the ultimate winner was never behind the other collector in number of distinct coupon types collected.  Then
\[
        b_d \sim \frac{2}{d}.
\]
\end{theorem}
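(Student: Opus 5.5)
Write $X_n,Y_n$ for the numbers of distinct types held by $A$ and $B$ after round $n$, and $D_n=X_n-Y_n$. Each coordinate is an independent pure-birth chain: from level $k$ it moves up with probability $(d-k)/d$. The event in \cref{thm:main} says that one player (by symmetry $A$, contributing a factor $2$) finishes strictly first, or ties and is then declared winner by a convention to be fixed, and that $D_n\ge 0$ for all $n$. Since both chains start at $0$, the walk $D$ sits on the tie boundary $D=0$ for an initial stretch. The plan is to decompose at the first exit from $0$ and treat the first exit and the survival afterward separately.

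\textbf{Step 1 (entrance law).} While $X=Y=k$, one round gives $D=+1$ with probability $p_k(1-p_k)$, where $p_k=(d-k)/d$, and stays tied otherwise. Hence the first one-sided break occurs at a level $K$ with an explicit product formula, and
\[
\Prob(K>k)=\prod_{j<k}\frac{p_j^2+(1-p_j)^2}{1-2p_j(1-p_j)+2p_j(1-p_j)}\approx \prod_{j<k}\Bigl(1-\frac{2j}{d}\Bigr)\approx e^{-k^2/d}.
\]
So $K/\sqrt d$ converges to a Rayleigh law. At the break, $A$ leads by one.

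\textbf{Step 2 (Catalan harmonic).} After the break at level $k\approx s\sqrt d$, consider the jump chain of $D$, ignoring rounds where $D$ does not change. $D$ moves $\pm1$ with probabilities proportional to $p_X(1-p_Y)$ and $p_Y(1-p_X)$. For levels $o(d)$ these are nearly equal, so $D$ behaves like a symmetric walk started at $1$, and $A$ must never return to $0$. Survival to the end is therefore a gambler's-ruin event. The walk must avoid $0$ over roughly the number of jumps $N$ needed to finish; near completion the drift and the holding times change, which contributes the finishing requirement. The comparison function I use is the harmonic $h(D)=D$ for the symmetric walk, stopped when $A$ completes, together with the ruin probability $1/N_{\mathrm{eff}}$. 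The task is to show that the combined answer, integrated against the Rayleigh entrance law, equals exactly $1/d$ per player. A heuristic check: in the final phase, levels near $d$ with $d-k=m$ have transition probabilities $m/d$, so ties get resolved rapidly. The effective number of $\pm1$ steps is of order $d$, since each $D$-changing step occurs at rate about $2\cdot\frac{k}{d}\cdot\frac{d-k}{d}$, and summed over the process this gives a time-change in which the martingale $D$ is optional-stopped. Applying optional stopping to $h$ gives $\Prob(\text{survive})\approx \E[\text{starting value}]/\E[\text{terminal scale}]$. I expect this ratio to equal $1/d$, with the Rayleigh integral contributing only a constant that cancels. If the ratio does not cancel cleanly, I will compute it by a direct martingale argument: $D_n$ minus its compensator, with the compensator $\sum (p_{X}-p_Y)\approx \sum D/d$.

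\textbf{Step 3 (main obstacle: defect estimate).} $h$ is not exactly harmonic for the simultaneous-round chain. Diagonal moves, where both players advance or neither does, and the drift $(Y-X)/d$ toward the tie make $\E[h(D_{n+1})-h(D_n)\mid\mathcal F_n]=-D_n/d+O(\cdot)$. These defects must be summed along surviving paths and shown to be $o(1/d)$ relative to the main term. The approach is to bound the defect sum by $\E\sum_n |D_n|/d\cdot\Ind\{\text{alive}\}$, using a Doob $h$-transform (Bessel-3-like conditioning) to estimate $D$ along the surviving paths. Of the whole argument, this is the step I am least sure closes.

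Finally, I add the contribution of simultaneous finishes, which is $O(1/d)$ times a vanishing factor and must be shown to be $o(1/d)$, and multiply by $2$ for the two symmetric players to obtain $b_d\sim 2/d$.
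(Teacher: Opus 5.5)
Your Steps 2--3 use the wrong comparison function, and the defect you hope to show negligible is in fact of order one. Per round, $\E[D_{n+1}-D_n\mid\mathcal F_n]=-D_n/d$ exactly, so $D_n+\sum_{j<n}D_j/d$ is a martingale. Optional stopping from lead one at the break gives
\[
1=-\Prob(\text{fail})+\E\bigl[d-Y_\tau;\,\text{success}\bigr]+\E\sum_{j<\tau}\frac{D_j}{d}.
\]
Success has probability $O(1/d)$, and on success $D_\tau=d-Y_\tau$ is only the loser's residual, far smaller than $d$. Hence the accumulated defect $\E\sum_{j<\tau}D_j/d$ over alive paths equals $2-o(1)$, not $o(1/d)$. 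No Doob-transform estimate of $D$ along surviving paths can make it small. Your fallback (``$D_n$ minus its compensator'') is exactly this identity, and it cannot pin down $\Prob(\text{success})$ to the needed precision $o(1/d)$. The heuristic $\Prob(\text{survive})\approx\E[\text{starting value}]/\E[\text{terminal scale}]$ is also unfounded. $h(D)=D$ is not constant on the moving success boundary $D=d-Y$, so optional stopping produces $\E[d-Y_\tau;\,\text{success}]$, i.e.\ it would require the law of the loser's residual.

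The missing idea is to normalize by the remaining distance. Work in coordinates $s=C_B$, $g=C_A-C_B$, $m=d-s$, on the self-loop-erased chain, and use the space-time Catalan harmonic $H(s,g)=(g+1)/(m+1)$. It equals $1$ on the entire success boundary $g=m$ and $0$ on the killing line $g=-1$. The restoring drift in $g$ is cancelled to first order by the growth of $1/(m+1)$, so the one-step drift of $H$ is exactly $\frac{m-g}{(m+1)((m-g)d+(s+g)m)}\in\bigl[0,\frac{1}{d(m+1)}\bigr]$. You then still need a Green estimate for this small defect. At age $r$ a lead change has probability at least $r/d$, and an up-move given a lead change has probability at most $1/2$. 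Comparison with a simple symmetric walk then gives survival to age $a$ of order $\sqrt d/a$. With $O(1)$ expected steps per age, a dyadic sum bounds the total defect by $O\bigl(H\,(s_0/d+\log d/\sqrt d)\bigr)=o(H)$ from break level $s_0$, and the terminal tie costs only $O(d^{-3/2})$.

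Note also that the killing line is $g=-1$, not $g=0$. Ties are allowed after the break (you wrote $D_n\ge0$ yourself), and this is the source of the factor $g+1$, i.e.\ survival $\approx 2/d$ from lead one. Your constant bookkeeping has two compensating factor-of-two slips. Step~1 says a tied round either gives $D=+1$ or stays tied, ignoring the equally likely breaks in $B$'s favour. Step~2 forbids a return to $0$, which would give survival $\approx 1/d$. Correcting only one of them would yield $b_d\sim 1/d$.
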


We use the phrase \emph{terminal-condition flux argument} in a modest, self-contained sense.  It means that the event is decomposed according to first entry through a relevant terminal or boundary set, that the asymptotic distribution of this entry point is identified, and that the conditional probability of final success from the entry state is then estimated.  In the present problem the relevant boundary is the tie set $g=0$; the first one-sided tie-break supplies the entrance mass; and the post-break survival probability is controlled by a Catalan comparison harmonic.

The proof is not a direct use of the Gessel--Viennot machinery \cite{GV}.  Although the problem is reminiscent of nonintersecting lattice paths, the simultaneous-round dynamics do not give a fixed-step path model after projection to the two collection counts.  Starting from a tie, the process typically remains tied for a long diagonal segment.  The first one-sided tie-break occurs when each collector has seen order $d^{1/2}$ distinct coupon types.  Conditional on such a break, the leader begins with lead one.  The remaining problem is then a lead excursion in a shrinking wedge.  Its leading comparison function is the Catalan harmonic for the one-dimensional comparison walk,
\[
        H(s,g)=\frac{g+1}{d-s+1}.
\]
In the exact simultaneous-round chain this function is not perfectly harmonic; it is slightly subharmonic, in the Markov-chain sense that its expected one-step change is nonnegative.  The main analytic step is to prove that the accumulated defect of $H$ under the exact transition operator is negligible in the first-break window.

\section{The one-sided event and changed-state chain}

Let $C_A(n)$ and $C_B(n)$ denote the numbers of distinct coupons seen by collectors $A$ and $B$ after $n$ simultaneous rounds.  Let
\[
        T_A:=\inf\{n:C_A(n)=d\},\qquad
        T_B:=\inf\{n:C_B(n)=d\}.
\]
The event that $A$ wins and is never behind is
\[
        \mathcal E_A:=\{T_A<T_B\}\cap\{C_A(n)\ge C_B(n)\text{ for every }n\ge0\}.
\]
Terminal ties are not wins for either player.  The events that $A$ wins and was never behind and that $B$ wins and was never behind are disjoint, are exchanged by symmetry, and have union equal to the event defining $b_d$.  Hence
\[
        b_d=2\Prob(\mathcal E_A).
\]
We write
\[
        E_d:=\Prob(\mathcal E_A).
\]
Thus the theorem is equivalent to
\[
        E_d\sim \frac{1}{d}.
\]

It is useful to use coordinates adapted to the tie boundary.  Let
\[
        s=C_B,
        \qquad
        g=C_A-C_B.
\]
Thus $s$ is the number of distinct coupons already collected by collector $B$, and $g$ is $A$'s lead over $B$.  The safe region is
\[
        \mathcal W_d:=\{(s,g):0\le s\le d,
        \ 0\le g\le d-s\}.
\]
If $B$ has collected $s$ types and $A$ has lead $g$, then $B$ is missing
\[
        m=d-s
\]
coupon types, while $A$ is missing $m-g$ types.

Self-loops, in which neither distinct-coupon count changes, will be erased.  We record the elementary reduction explicitly.

\begin{lemma}[Self-loop erasure and finite absorption]\label{lem:self-loop-erasure}
Consider the original simultaneous-round chain, stopped when it first reaches the success boundary for $A$, the unsafe boundary $g=-1$, or the simultaneous terminal point $(d,0)$.  If one deletes all rounds in which neither distinct-coupon count changes, then the induced sequence of nonself transitions is a Markov chain whose transition probabilities are proportional to the three weights displayed below.  Consequently all hitting probabilities of these absorbing sets are unchanged by self-loop erasure.

Moreover, the changed-state chain is absorbed after at most $2d-(2s+g)$ nonself transitions when started from an interior state $(s,g)$.  In particular, all stopping times below are almost surely finite and bounded by $2d$.
\end{lemma}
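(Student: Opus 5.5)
The plan is to first make the transition structure of the simultaneous-round chain explicit in the $(s,g)$ coordinates, then apply the standard lazy-chain (geometric-sojourn) argument, and finally bound absorption with a monotone potential.

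\textbf{Transition structure.} From a state with $B$ missing $m=d-s$ types and $A$ missing $m-g$ types, the two draws in a round are independent. $A$ draws a new type with probability $p_A=(m-g)/d$, and $B$ draws a new type with probability $p_B=m/d$. There are three nonself moves, with weights
\[
w_{+}=p_A(1-p_B),\qquad w_{-}=(1-p_A)p_B,\qquad w_{0}=p_Ap_B .
\]
These correspond to $(s,g)\to(s,g+1)$, $(s,g)\to(s+1,g-1)$ and $(s,g)\to(s+1,g)$ respectively. The self-loop has probability $(1-p_A)(1-p_B)$, and it is $<1$ at every non-absorbed state. Indeed, if $m=0$ the chain is at $(d,0)$ and has been stopped. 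Otherwise $p_B>0$.

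\textbf{Self-loop erasure.} The original chain is time-homogeneous and Markov. By the strong Markov property applied at successive exit times from the current state, the number of self-loops taken there is geometric. The state entered at exit is independent of that number, and its law is $w_\bullet/(w_++w_-+w_0)$. Hence the sequence of distinct visited states, i.e.\ the jump chain, is Markov with kernel proportional to the three weights. Absorption events are determined by the sequence of states visited, so hitting probabilities of the absorbing sets coincide for the two chains. The original chain leaves every non-absorbed state almost surely, since its exit probability is positive. Therefore it is absorbed almost surely, and the jump chain is well defined up to absorption.

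\textbf{Finite absorption.} I would use the potential $\Phi=2s+g=C_A+C_B$. Each of the three moves increases $\Phi$ by exactly $1$, because $w_+$ changes $(s,g)$ by $(0,+1)$, $w_-$ by $(+1,-1)$, and $w_0$ by $(+1,0)$. On $\mathcal W_d$ we have $\Phi\le 2d$, with equality only at the corner $(d,0)$, which is absorbing. The success boundary for $A$ is $g=d-s$, i.e.\ $C_A=d$. Any exit through $g=-1$ is a single step, counted as one unit of $\Phi$. Consequently, starting from $(s,g)$, at most $2d-\Phi$ nonself transitions can occur before absorption. The only subtle point is to check that every state in $\mathcal W_d$ with $\Phi<2d$ that is not absorbing has positive total weight, so that the chain cannot get stuck. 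This holds because $m\ge1$ there, so $w_-+w_0=p_B>0$. A second check is that the unsafe move from $g=0$, namely $w_-$, lands at $g=-1$, where the chain stops.

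I expect no real obstacle. The only care needed is in the strong-Markov justification of erasure, and in treating the case $p_A=0$ (when $A$ has already won) as absorbing.
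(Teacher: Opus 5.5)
Your proof is correct and follows the paper's route. Your weights $p_Ap_B$, $p_A(1-p_B)$ and $(1-p_A)p_B$ are exactly $(m-g)m/d^2$, $(m-g)s/d^2$ and $(s+g)m/d^2$; you erase self-loops by geometric sojourns; and you bound absorption with the potential $C_A+C_B=2s+g$.

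One slip: the both-new move $(s,g)\to(s+1,g)$ raises $2s+g$ by $2$, not $1$. The correct claim is that every nonself move raises the potential by \emph{at least} one, which is all the bound $2d-(2s+g)$ needs and is what the paper uses.
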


\begin{proof}
At state $(s,g)$, collector $A$ has $s+g$ distinct coupons and is missing $m-g=d-s-g$ coupons, while collector $B$ has $s$ distinct coupons and is missing $m=d-s$ coupons.  In one simultaneous round, the probabilities of the three nonself events are
\[
        \frac{(m-g)m}{d^2},\qquad
        \frac{(m-g)s}{d^2},\qquad
        \frac{(s+g)m}{d^2},
\]
corresponding respectively to both collectors getting new coupons, only $A$ getting a new coupon, and only $B$ getting a new coupon.  Conditional on the event that a nonself transition occurs, the factor $d^{-2}$ cancels, giving probabilities proportional to the displayed weights.  Deleting a geometrically distributed number of self-loops before each nonself transition therefore does not change the law of the next nonself state, nor any hitting probability of an absorbing set.

Finally, at every nonself transition the quantity
\[
        C_A+C_B=2s+g
\]
increases by at least one.  Since $C_A+C_B\le 2d$ before or at absorption, no path can contain more than $2d-(2s+g)$ nonself transitions from the starting state $(s,g)$.
\end{proof}

From an interior state $0\le g<d-s$, the changed-state transitions are as follows:
\[
\begin{array}{c|c|c}
\text{event} & \text{new state} & \text{weight}\\
\hline
\text{both get new coupons} & (s+1,g) & (d-s-g)(d-s)\\[0.3em]
A\text{ alone gets a new coupon} & (s,g+1) & (d-s-g)s\\[0.3em]
B\text{ alone gets a new coupon} & (s+1,g-1) & (s+g)(d-s).
\end{array}
\]
The last transition is killing if $g=0$.  The success boundary for $A$ is
\[
        g=d-s\ge1,
\]
because then $A$ has completed its collection and $B$ has not.  The simultaneous terminal point is
\[
        (s,g)=(d,0),
\]
which is not a win for either player.

Let $U_d(s,g)$ be the probability that $A$ eventually wins and never falls behind, starting from $(s,g)$.  Then
\[
        E_d=U_d(0,0).
\]
The boundary conditions are
\[
        U_d(s,d-s)=1\quad(0\le s<d),
        \qquad
        U_d(d,0)=0.
\]
In the interior, with $m=d-s$, put
\[
        D(s,g):=(m-g)m+(m-g)s+(s+g)m.
\]
Then
\begin{equation}\label{eq:U-recursion}
U_d(s,g)=
\frac{(m-g)mU_d(s+1,g)+(m-g)sU_d(s,g+1)+(s+g)mU_d(s+1,g-1)}{D(s,g)},
\end{equation}
where $U_d(s+1,-1)$ is interpreted as $0$.

\section{Tie skeleton and first-break distribution}

Define
\[
        e_m:=U_d(d-m,0),
        \qquad
        w_m:=U_d(d-m,1).
\]
Here $m$ is the number of coupon types still missing for $B$; in particular, $e_d=E_d$.  At a tie with $m$ coupons still missing for both collectors, after self-loops are erased, there are three possibilities:
\[
\begin{array}{c|c|c}
\text{event} & \text{new state} & \text{conditional probability}\\
\hline
\text{both get new coupons} & \text{tie with }m-1\text{ missing} & \frac{m}{2d-m}\\[0.5em]
A\text{ alone gets a new coupon} & \text{lead one, }m\text{ missing for }B & \frac{d-m}{2d-m}\\[0.5em]
B\text{ alone gets a new coupon} & \text{failure for }A & \frac{d-m}{2d-m}.
\end{array}
\]
These conditional probabilities are the tie-state weights from the changed-state transition table, divided by their sum.  Therefore
\begin{equation}\label{eq:tie-recursion}
        e_m=\frac{m}{2d-m}e_{m-1}+\frac{d-m}{2d-m}w_m.
\end{equation}
We now unroll this renewal recursion along the tie boundary.  Write
$r=d-m$ for the tied level, that is, the number of distinct coupon types seen
by each collector while the process is still tied.  At tied level $r$, the
probability that the next nonself transition preserves the tie is
\[
        \frac{d-r}{d+r},
\]
and the probability that $A$ alone makes the first one-sided move is
\[
        \frac{r}{d+r}.
\]
Thus, for $A$ to be the first player to break the tie at level $k$, the chain
must preserve the tie at levels $0,1,\dots,k-1$ and then break in favor of
$A$ at level $k$.  The level-$0$ tie-preserving factor is equal to one, so it
is omitted from the product.  This gives the exact tie-skeleton formula
\begin{equation}\label{eq:tie-skeleton}
        E_d=
        \sum_{k=1}^{d-1}
        \left(\prod_{r=1}^{k-1}\frac{d-r}{d+r}\right)
        \frac{k}{d+k}\, w_{d-k}.
\end{equation}

Let
\begin{equation}\label{eq:pi-def}
        \pi_{d,k}:=
        \left(\prod_{r=1}^{k-1}\frac{d-r}{d+r}\right)
        \frac{2k}{d+k},
        \qquad 1\le k\le d-1.
\end{equation}
Thus $\pi_{d,k}$ is the probability that the first one-sided tie-break occurs
at level $k$, regardless of which player breaks it.  Equivalently, the
coefficient in \eqref{eq:tie-skeleton} is $\pi_{d,k}/2$, and hence
\begin{equation}\label{eq:tie-expectation}
        E_d=\frac{1}{2}\sum_{k=1}^{d-1}\pi_{d,k}w_{d-k}.
\end{equation}
The remaining tie-stage mass is
\[
        \rho_d=\prod_{r=1}^{d-1}\frac{d-r}{d+r},
\]
corresponding to preservation of the tie all the way to simultaneous
completion.  This is the tie-stage hazard decomposition: starting from a tie,
the bounded tie-stage evolution ends either with a first one-sided break at a
unique level $1\le k\le d-1$, in favor of one of the two players, or with no
one-sided break before simultaneous completion.  Therefore
\[
        \sum_{k=1}^{d-1}\pi_{d,k}+\rho_d=1.
\]

\begin{lemma}[First-break localization and tails]\label{lem:first-break}
Let $K_d$ be the extended first-break level defined by
\[
        \Prob(K_d=k)=\pi_{d,k}\quad(1\le k\le d-1),
        \qquad
        \Prob(K_d=d)=\rho_d,
\]
where
\[
        \rho_d:=\prod_{r=1}^{d-1}\frac{d-r}{d+r}
\]
is the probability that no one-sided tie-break occurs before simultaneous completion.  Thus the value $K_d=d$ records this exceptional terminal event.  Then
\[
        \frac{K_d}{\sqrt d}\Rightarrow X,
        \qquad
        \Prob(X\in \dd x)=2x e^{-x^2}\Ind_{\{x>0\}}\dd x .
\]
In particular,
\[
        K_d\to\infty,
        \qquad
        \frac{K_d}{d}\to0
\]
in probability.  More precisely, for $k=o(d)$,
\[
        \prod_{r=1}^{k-1}\frac{d-r}{d+r}
        =\exp\left(-\frac{k(k-1)}{d}+O\left(\frac{k^4}{d^3}\right)\right),
\]
and there are absolute constants $C,c>0$ such that, for every $R\ge1$ and all sufficiently large $d$,
\begin{equation}\label{eq:first-break-tail}
        \sum_{k\ge R\sqrt{d\log d}}\pi_{d,k}
        \le C d^{-cR^2}+e^{-cd}.
\end{equation}
More generally, for every $M\ge1$ and all sufficiently large $d$,
\begin{equation}\label{eq:first-break-rayleigh-tail}
        \sum_{k\ge M\sqrt d}\pi_{d,k}
        \le C e^{-cM^2}+e^{-cd}.
\end{equation}
In particular, for $R$ large enough the tail in
\eqref{eq:first-break-tail} is $o(d^{-1})$.
\end{lemma}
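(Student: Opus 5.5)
The whole lemma will follow from one explicit formula, namely the logarithm of the survival product
\[
        P_k:=\prod_{r=1}^{k-1}\frac{d-r}{d+r}.
\]
Since $\log\frac{1-x}{1+x}=-2x-\tfrac{2}{3}x^3-\dots$ with $x=r/d$, we have
\[
        \log P_k=-\sum_{r=1}^{k-1}\Bigl(\frac{2r}{d}+O\bigl(\tfrac{r^3}{d^3}\bigr)\Bigr)
        =-\frac{k(k-1)}{d}+O\Bigl(\frac{k^4}{d^3}\Bigr)
\]
uniformly for $k\le d/2$. This is the stated expansion.

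For the distributional limit, fix $0<a<b$ and take $k$ with $k/\sqrt d\in[a,b]$. Then $k^4/d^3\to0$, so $P_k=e^{-k^2/d}(1+o(1))$. Also $\frac{2k}{d+k}=\frac{2k}{d}(1+o(1))$. Hence $\sum_{a\sqrt d\le k\le b\sqrt d}\pi_{d,k}$ is a Riemann sum for $\int_a^b 2xe^{-x^2}\,\dd x$, with mesh $d^{-1/2}$, and it converges to that integral. Because the Rayleigh law has total mass one, and the atom $\rho_d\le P_{\lceil\sqrt d\rceil}\to0$, convergence on compact intervals already gives $K_d/\sqrt d\Rightarrow X$ (no mass escapes). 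In particular $K_d\to\infty$ and $K_d/d\to0$ in probability follow at once.

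The tail bounds are the only place that needs care, because the quartic error term is not controlled once $k$ is of order $d$. I would avoid Taylor error terms altogether and use the crude inequality $\log\frac{1-x}{1+x}\le -2x$ for $0\le x<1$. This gives $P_k\le e^{-k(k-1)/d}$ for every $k\le d$, with no restriction on $k$, and hence
\[
        \sum_{k\ge M\sqrt d}\pi_{d,k}
        \le\sum_{k\ge M\sqrt d}\frac{2k}{d}e^{-k(k-1)/d}.
\]
Comparing this with the integral $\int_{M-1/\sqrt d}^\infty 2xe^{-x^2+x/\sqrt d}\,\dd x$ gives a bound $Ce^{-cM^2}$. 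I would also add the atom $\rho_d\le e^{-(d-1)(d-2)/d}\le e^{-cd}$, in case one prefers to include the event $K_d=d$ in the tail; that is where the $e^{-cd}$ term comes from. Finally, setting $M=R\sqrt{\log d}$ gives $Ce^{-cR^2\log d}=Cd^{-cR^2}$, which is \eqref{eq:first-break-tail}. Choosing $R$ with $cR^2>1$ makes this $o(d^{-1})$.

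The main obstacle is therefore mild: we need a tail bound that holds uniformly in $k$ up to $d-1$, and the one-sided inequality $\log\frac{1-x}{1+x}\le-2x$ handles this.
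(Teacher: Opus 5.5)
Your proof is correct and follows essentially the paper's argument: the Taylor expansion of $\log\frac{1-x}{1+x}$ for the product, Riemann sums on compact windows for the Rayleigh limit, and the one-sided bound $\log\frac{1-x}{1+x}\le -2x$ compared with the Rayleigh integral, with $M=R\sqrt{\log d}$ for the logarithmic tail. Applying that bound for every $k\le d$, as you do, even spares the paper's separate $k>d/2$ step and its Stirling estimate for $\rho_d$. There is one harmless slip: $P_{\lceil\sqrt d\rceil}\to e^{-1}$, not $0$. You do not need that claim, because convergence on compact windows to a limit of total mass one already rules out escape of mass, and in any case $\rho_d\le e^{-(d-1)}$.
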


\begin{proof}
For $k=o(d)$, uniformly in that range,
\[
\log\prod_{r=1}^{k-1}\frac{d-r}{d+r}
=
\sum_{r=1}^{k-1}\log\left(\frac{1-r/d}{1+r/d}\right).
\]
Since
\[
\log\left(\frac{1-x}{1+x}\right)=-2x+O(x^3)
\]
uniformly for $0\le x\le 1/2$, the asserted expansion follows.  Consequently, for each fixed $M<\infty$, uniformly for $1\le k\le M\sqrt d$,
\[
        \pi_{d,k}
        =\frac{2k}{d}
        \exp\left(-\frac{k^2}{d}+o(1)\right).
\]
Thus, for every $0\le u<v<\infty$, Riemann-sum convergence gives
\[
        \sum_{u\sqrt d<k\le v\sqrt d}\pi_{d,k}
        \longrightarrow
        \int_u^v 2x e^{-x^2}\dd x .
\]

For the quantitative upper tail, first suppose $k\le d/2$.  Since
\[
        \log\left(\frac{1-x}{1+x}\right)\le -2x
        \qquad (0\le x<1),
\]
we have
\[
        \prod_{r=1}^{k-1}\frac{d-r}{d+r}
        \le \exp\left(-\frac{k(k-1)}{d}\right).
\]
Also $2k/(d+k)\le 2k/d$.  Summing by comparison with the integral of the Rayleigh tail gives, for every $M\ge1$,
\[
        \sum_{M\sqrt d\le k\le d/2}\pi_{d,k}
        \le C e^{-cM^2}.
\]
Taking $M=R\sqrt{\log d}$ gives
\[
        \sum_{R\sqrt{d\log d}\le k\le d/2}\pi_{d,k}
        \le C d^{-cR^2}.
\]
For $k>d/2$, the product has already acquired an exponentially small factor at $\lfloor d/2\rfloor$; hence that part of the tail is $O(e^{-cd})$.  This proves both \eqref{eq:first-break-tail} and \eqref{eq:first-break-rayleigh-tail}.  Finally, the event of no one-sided break before terminal completion has probability
\[
        \prod_{r=1}^{d-1}\frac{d-r}{d+r}
        =\frac{d!(d-1)!}{(2d-1)!}=O(e^{-cd}),
\]
by Stirling's formula.  The bound with threshold $M\sqrt d$ proves tightness of $K_d/\sqrt d$, and the Riemann-sum convergence on compact intervals then proves the displayed Rayleigh convergence.  This convergence implies $K_d\to\infty$ in probability, while tightness on the $\sqrt d$ scale gives $K_d/d\to0$ in probability.  The logarithmic tail bound is exactly \eqref{eq:first-break-tail}.
\end{proof}

\section{The lead-excursion theorem}

The tie-skeleton formula reduces the proof of Theorem~\ref{thm:main} to the asymptotics of $w_m=U_d(d-m,1)$ for $m=d-k$ with $k$ in the first-break window.  In $(s,g)$ coordinates this means $s=k$, $g=1$, with $s=o(d)$.  We prove the needed estimate uniformly down to bounded ages; this avoids any small-$k$ tail loss in the final summation.  Ties remain admissible during this never-behind excursion: the first unsafe state is $g=-1$, not $g=0$.

For the one-dimensional comparison walk in the wedge with absorbing lower boundary at $g=-1$ and upper boundary at $g=d-s$, the usual Catalan, or gambler's-ruin, harmonic is
\begin{equation}\label{eq:H-def}
        H(s,g):=\frac{g+1}{d-s+1}=\frac{g+1}{m+1}.
\end{equation}
We use this as a comparison function for the simultaneous-round chain.
The same formula gives the boundary values
\[
        H(s,-1)=0,
        \qquad
        H(s,d-s)=1\quad(0\le s<d),
        \qquad
        H(d,0)=1.
\]
The value at $(d,0)$ is used only for optional stopping: the simultaneous terminal point is not a win for either player, and its contribution is subtracted explicitly below.

\begin{theorem}[Lead-excursion estimate]\label{thm:lead-excursion}
Fix $g_0\ge0$.  Uniformly for integer sequences $s_0=s_0(d)$ satisfying $0\le s_0<d-g_0$ and $s_0=o(d)$, one has
\begin{equation}\label{eq:lead-estimate}
        U_d(s_0,g_0)=H(s_0,g_0)
        \left(1+O_{g_0}\left(\frac{s_0}{d}+\frac{\log d}{\sqrt d}\right)\right).
\end{equation}
In particular, if $s_0=O(d^{1/2}\log d)$ and $g_0=1$, then
\[
        U_d(s_0,1)=\frac{2}{d-s_0+1}(1+o(1))=\frac{2}{d}(1+o(1)),
\]
where the second $o(1)$ also absorbs the factor $s_0/d$.
\end{theorem}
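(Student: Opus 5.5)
The plan is to run optional stopping on the comparison harmonic $H$ from \eqref{eq:H-def}, using an exact formula for its one-step defect, and then show that the two resulting error terms are $O_{g_0}\bigl(H(s_0,g_0)(s_0/d+\log d/\sqrt d)\bigr)$.

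\textbf{Step 1: the exact defect.} Let $P$ be the changed-state transition operator from the table after \cref{lem:self-loop-erasure}, and put $m=d-s$. Evaluate $H$ at the three successor states:
\[
H(s+1,g)=\frac{g+1}{m},\qquad H(s,g+1)=\frac{g+2}{m+1},\qquad H(s+1,g-1)=\frac{g}{m}.
\]
Here $H(s+1,-1)=0$, which matches the killing convention. A direct expansion with $D(s,g)=m^2+2ms-gs$ gives
\[
\delta(s,g):=(PH-H)(s,g)=\frac{m-g}{(m+1)\,D(s,g)}\ \ge 0 .
\]
This is the sense in which $H$ is slightly subharmonic. Let $X_n$ be the changed-state chain started at $(s_0,g_0)$ and $\tau$ its absorption time, which is bounded by $2d$ by \cref{lem:self-loop-erasure}. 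Then $H(X_n)-\sum_{j<n}\delta(X_j)$ is a bounded martingale. Since $H$ equals $1$ on the success boundary and at $(d,0)$, and $0$ at $g=-1$, optional stopping gives
\[
U_d(s_0,g_0)=H(s_0,g_0)+\E\sum_{j<\tau}\delta(X_j)-\Prob\bigl(X_\tau=(d,0)\bigr).
\]

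\textbf{Step 2: the terminal correction.} To reach $(d,0)$ the chain must pass through some tie state $(d-m,0)$ with small $m$. From there, by the tie transition probabilities, it must take the ``both new'' step repeatedly, at probabilities $m/(2d-m)$. The product of these factors is $O(m!\,d^{-m})$-type small. I would bound the probability of reaching a tie with $m\le m_*$ by $C\,H(s_0,g_0)$, using the supersolution comparison described below. Then $\Prob(X_\tau=(d,0))=O(H/d)$, which is negligible.

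\textbf{Step 3: the accumulated defect (main obstacle).} Write $\E\sum_{j<\tau}\delta(X_j)=\sum_x G(x_0,x)\delta(x)$, with $G$ the Green function of the killed chain.

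First, the early stage. While $m\ge d/2$ we have $\delta\le C/d^2$. Since $C_A+C_B$ increases at each step, at most $O(d)$ steps occur, and the chain must survive for those steps to accumulate defect. I would split according to $s$.
\begin{itemize}
\item For $s\le d^{1/2}\log d$: lateral moves have probability $O(s/d)$, so $g$ is essentially frozen and $g\le g_0+O(1)$ with high probability. These steps contribute at most $O(d^{1/2}\log d\cdot d^{-2})=O(H\log d/\sqrt d)$.
\item For larger $s$: the lateral walk in $g$ is nearly symmetric, with up-rate $(m-g)s$ against down-rate $(s+g)m$. Survival to level $s$ then costs a gambler's-ruin factor $O\bigl((g_0+1)\sqrt{d}/s\bigr)$, or alternatively $O(H\cdot d/s)$ measured against the later survival. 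Summing $\delta$ against this survival probability gives $O(H\,(s_0/d+\log d/\sqrt d))$.
\end{itemize}

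Second, the late stage, $m$ small. Here $\delta\approx (m-g)/(m(m+1)(m+2s))\approx 1/(2dm)$, which is no longer negligible pointwise. However, only surviving paths with $g\le m$ reach it. I would control this stage with a Lyapunov supersolution $V(s,g)=(g+1)\psi(m)$, taking $\psi(m)=\frac{1}{m+1}\bigl(\epsilon_d+\sum_{j>m}\frac{c}{j\,d}\bigr)$-type, and tune it so that $(PV-V)\ge\delta$ on the wedge. Then $\E\sum\delta\le V(s_0,g_0)$, and this is $O(H\cdot\epsilon_d)$ provided the late-stage weights sum to $o(1)$ relative to $H$.

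Verifying this drift inequality for the exact three-weight chain is the step I expect to be hardest. If it fails as stated, I would fall back on the occupation-time decomposition. In that route one compares $G(x_0,x)$ with $H(x_0)\cdot(\text{Green function of the }h\text{-transformed walk})$ via Doob's $h$-transform with $h=H$. Under the $h$-transform, $g$ has positive drift toward the upper boundary, so the expected time spent at each $m$-level is $O(1)$. That bound would yield the same $O_{g_0}\bigl(s_0/d+\log d/\sqrt d\bigr)$ relative error.

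The specialization $g_0=1$, $s_0=O(d^{1/2}\log d)$ then follows immediately from \eqref{eq:lead-estimate}.
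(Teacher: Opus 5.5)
Your Step~1 is the paper's setup exactly: the same exact defect $\delta=(m-g)/((m+1)D)$ and the same identity $U_d=H+\E\sum_{n<\tau}\delta-\Prob(X_\tau=(d,0))$. Your middle-range heuristic, that reaching age $s$ costs $\sqrt d/s$, is also the right one. The gap is that this survival bound, $\Prob_{s_0,g_0}(\tau_a<\tau)\le C_{g_0}\sqrt d/a$ for $a\ge4\max\{s_0,\lceil\sqrt d\rceil\}$, carries the whole estimate, and you only assert it. Proving it needs two ingredients you do not supply.
\begin{enumerate}[label=(\roman*)]
\item A high-probability lower bound on the number of lead changes before age $a$. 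At age $r$ the next step changes the lead with probability at least $r/d$, because $d\bigl((m-g)r+(r+g)m\bigr)-rD(r,g)=m^2(r+g)\ge0$. So on survival the count dominates a sum of independent Bernoulli$(r/d)$ variables, $s_0\le r<a$. Chernoff then gives at least $a^2/(8d)$ lead changes outside probability $e^{-ca^2/d}$.
\item A comparison of the lead with a simple symmetric walk. This is an exact domination, not a perturbative ``nearly symmetric'' statement: at a lead change the up-probability is at most $1/2$ because $(s+g)m-(m-g)s=gd\ge0$. A monotone coupling plus the ballot bound $C_{g_0}N^{-1/2}$ then finishes (this is \cref{lem:survival}).
\end{enumerate}
Your alternative ``$O(H\,d/s)$'', read as a survival probability, is $\approx(g_0+1)/s$ and is too small by a factor $\sqrt d$.

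The late stage is where your plan actually breaks. To get $\E\sum\delta\le V(s_0,g_0)$ you need $V\ge0$ and $V-PV\ge\delta$. The sign you wrote, $PV-V\ge\delta$, gives no upper bound. With the correct sign, no product $V=(g+1)\psi(m)$ can be small. At a tie one computes $V-PV=m^2\bigl(\psi(m)-\psi(m-1)\bigr)/D(s,0)$, while $\delta(s,0)=m/\bigl((m+1)D(s,0)\bigr)$. This forces $\psi(m)-\psi(m-1)\ge 1/(m(m+1))$ for every $m\le m_0$. Telescoping gives $V(s_0,g_0)\ge(g_0+1)\bigl(1-1/(m_0+1)\bigr)$ for nonnegative $\psi$, which is useless.

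The $h$-transform fallback also fails as stated. One has $\E\sum\delta=H(s_0,g_0)\,\widehat{\E}\sum\delta/H$, with $\delta/H=(m-g)/((g+1)D)$, which equals $1/(2d-m)\ge1/(2d)$ at $g=0$. So ``$O(1)$ time per level'' yields only $\E\sum\delta=O(H)$, i.e.\ $U_d\asymp H$ rather than $U_d\sim H$. The missing smallness is that the transformed lead at age $a$ must be of order $a/\sqrt d$. But $\widehat{\E}$ of $1/(G+1)$ over visits at age $a$ equals $\E[\#\text{visits at age }a]/\bigl((d-a+1)H(s_0,g_0)\bigr)$, so this is the survival bound again.

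The fix is to use that bound in the late stage too. Reaching age $d/2$ costs $O(d^{-1/2})$. On a dyadic block $M\le m<2M$ one has $\delta\le1/(dM)$ and expected occupation at most $2(M+1)\cdot O(d^{-1/2})$. So each of the $O(\log d)$ blocks contributes $O(d^{-3/2})$.

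In Step~2, the ``repeated both-new steps'' picture is wrong, since paths can leave and re-enter the tie set. The only entrance to $(d,0)$ is from $(d-1,0)$, which is left immediately: to $(d,0)$ with probability $1/(2d-1)$, or to success with probability $(d-1)/(2d-1)$. Hence $\Prob(X_\tau=(d,0))\le U_d/(d-1)$. Combined with the optional-stopping identity this gives $U_d=(H+\E\sum\delta)(1+O(1/d))$, with no supersolution needed.
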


\begin{remark}
The estimate is stated also for $g_0=0$; this stronger form is a useful by-product and gives the main asymptotic directly from $E_d=U_d(0,0)$.  We nevertheless prove Theorem~\ref{thm:main} through the tie-skeleton formula, because that route exposes the renewal structure at the tie set, the Rayleigh first-break law, and the Catalan lead excursion after the first one-sided break.  The direct $g_0=0$ estimate is analytically shorter, but it obscures the source of the constant.
\end{remark}

The proof occupies the next two sections.  The central point is a Green estimate for the defect of $H$.

\section{Defect of the Catalan comparison harmonic}

Let $P$ denote the changed-state transition operator in the interior of $\mathcal W_d$, with the unsafe transition from $g=0$ to $g=-1$ assigned value $0$.  For a function $F$,
\[
(PF)(s,g)=
\frac{(m-g)mF(s+1,g)+(m-g)sF(s,g+1)+(s+g)mF(s+1,g-1)}{D(s,g)}.
\]
Define
\[
        \Delta(s,g):=(PH)(s,g)-H(s,g).
\]

\begin{lemma}[Exact defect]\label{lem:defect}
For every interior state $0\le g<d-s$, with $m=d-s$,
\begin{equation}\label{eq:Delta-exact}
        \Delta(s,g)=\frac{m-g}{(m+1)D(s,g)}.
\end{equation}
In particular, $H$ is subharmonic in the probabilistic sense $PH\ge H$.  Moreover,
\begin{equation}\label{eq:Delta-simple-bound}
        0\le \Delta(s,g)\le \frac{1}{d(m+1)}.
\end{equation}
\end{lemma}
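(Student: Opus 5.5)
The plan is a direct algebraic computation. Multiply $\Delta(s,g)$ by $D(s,g)$ and evaluate $H$ at the three successor states. Then pass to the common denominator $m+1$ and simplify.

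First we check that the single closed form $H=(g+1)/(m+1)$ is valid at every successor state, including boundary ones, so that no case split is needed:
\begin{itemize}
\item At $(s+1,g)$ the missing count becomes $m-1$, so $H=(g+1)/m$. If this state is $(d,0)$, the value $1$ agrees with the formula.
\item At $(s,g+1)$ we have $H=(g+2)/(m+1)$. If $g+1=m$, this state lies on the success boundary and the formula correctly gives $1$.
\item At $(s+1,g-1)$ we have $H=g/m$. If $g=0$, this is the killed transition, and the formula gives $0$, matching the convention that the unsafe value is $0$.
\end{itemize}

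With these values,
\[
D\,(PH-H)=(m-g)(g+1)+(s+g)g+\frac{(m-g)s(g+2)}{m+1}-\frac{(g+1)D}{m+1}.
\]
Multiply through by $m+1$ and expand $D=(m-g)m+(m-g)s+(s+g)m$. The terms in $(m-g)$ combine to $(m-g)(g+1)+(m-g)s$. The terms in $(s+g)$ combine to
\[
(s+g)\bigl[g(m+1)-(g+1)m\bigr]=(s+g)(g-m).
\]
The total is therefore
\[
(m-g)(g+1+s)-(m-g)(s+g)=m-g,
\]
which gives \eqref{eq:Delta-exact}.

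Since $0\le g<m$ in the interior, the numerator $m-g$ is positive and $D>0$. Hence $\Delta\ge0$, i.e.\ $PH\ge H$.

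For the upper bound in \eqref{eq:Delta-simple-bound}, regroup the denominator as
\[
D=(m-g)(m+s)+(s+g)m=(m-g)d+(s+g)m\ge (m-g)d,
\]
using $m+s=d$. Therefore $(m-g)/D\le 1/d$, and dividing by $m+1$ gives \eqref{eq:Delta-simple-bound}.

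The only delicate point is the boundary bookkeeping in the first step. One must confirm that the uniform formula for $H$ reproduces the boundary conventions at $g=-1$, at $g=d-s$, and at the terminal point $(d,0)$. Once that is verified, the algebra is routine.
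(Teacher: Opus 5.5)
Your proposal is correct and follows essentially the same route as the paper. Like the paper, you substitute the closed form of $H$ at the three successor states (the $g=0$ killing term vanishes by the convention $H(s+1,-1)=0$), clear the denominator $m+1$ to obtain $\Delta=(m-g)/((m+1)D)$, and bound it using $D=(m-g)d+(s+g)m\ge d(m-g)$; your explicit check that the single formula for $H$ reproduces the boundary values at successor states is a helpful detail that the paper leaves implicit.
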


\begin{proof}
From \eqref{eq:H-def},
\[
H(s+1,g)=\frac{g+1}{m},
\quad
H(s,g+1)=\frac{g+2}{m+1},
\quad
H(s+1,g-1)=\frac{g}{m}.
\]
Substitution in the definition of $P$ gives
\[
(PH)(s,g)=
\frac{(m-g)m\frac{g+1}{m}+(m-g)s\frac{g+2}{m+1}+(s+g)m\frac{g}{m}}{D(s,g)}.
\]
When $g=0$, the last numerator term is zero and corresponds to the convention $H(s+1,-1)=0$.
Subtracting $(g+1)/(m+1)$ and simplifying gives \eqref{eq:Delta-exact}.  Finally,
\[
D(s,g)=(m-g)d+(s+g)m\ge d(m-g),
\]
which implies \eqref{eq:Delta-simple-bound}.
\end{proof}

Let $\tau$ be the first time the changed-state chain hits the success boundary $g=d-s$, the unsafe boundary $g=-1$, or the simultaneous terminal point $(d,0)$.  By Lemma~\ref{lem:self-loop-erasure}, $\tau$ is almost surely finite and bounded by $2d$.

\section{A dyadic Green estimate}

We now prove the main estimate controlling the accumulated defect of $H$.  The estimate rests on two simple facts.  First, by the time the age coordinate has reached $a$, the lead has had order $a^2/d$ chances to change; survival to age $a$ therefore costs order $\sqrt d/a$.  Second, the defect of $H$ at age $s$ is at most $1/(d(d-s+1))$.  A dyadic summation over age then loses only a logarithmic factor.

\begin{lemma}[Dyadic Green estimate]\label{lem:green}
Fix $g_0\ge0$, and let $s_0=s_0(d)$ be an integer sequence satisfying $0\le s_0<d-g_0$ and $s_0=o(d)$.  Then, for all sufficiently large $d$,
\begin{equation}\label{eq:green-estimate}
        \E_{s_0,g_0}\sum_{n<\tau}\Delta(S_n,G_n)
        \le
        C_{g_0}H(s_0,g_0)
        \left(\frac{s_0}{d}+\frac{\log d}{\sqrt d}\right),
\end{equation}
where $C_{g_0}$ depends only on $g_0$.
Consequently,
\[
        \E_{s_0,g_0}\sum_{n<\tau}\Delta(S_n,G_n)=o(H(s_0,g_0)).
\]
\end{lemma}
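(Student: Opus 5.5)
The plan is to bound the accumulated defect by splitting according to the age coordinate $S_n$. I will combine the pointwise bound \eqref{eq:Delta-simple-bound} with a survival estimate for the lead coordinate. Write
\[
        \E_{s_0,g_0}\sum_{n<\tau}\Delta(S_n,G_n)
        \le \sum_{a\ge s_0}\ \sup_{g}\Delta(a,g)\;\E_{s_0,g_0}\bigl[N_a\bigr],
\]
where $N_a$ is the number of changed-state steps $n<\tau$ with $S_n=a$. At age $a$ the only transition that keeps $S$ fixed is ``$A$ alone'', which has conditional probability $(m-g)a/D\le a/d\le 1/2$ for $a\le d/2$. Hence $\E[N_a\mid \text{age } a \text{ reached before }\tau]\le 2$ there. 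For $a>d/2$ I will use the same geometric-trials argument, now with the cruder bound $N_a\le m+1$ coming from $G\le m$; as shown below, this region only needs a crude estimate.

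This reduces the problem to bounding $p(a):=\Prob_{s_0,g_0}(\text{age } a\text{ is reached before }\tau)$. The key step is a ballot-type survival bound
\[
        p(a)\le C_{g_0}\min\Bigl(1,\frac{\sqrt d}{\sqrt{a^2-s_0^2}}\Bigr),
        \qquad s_0<a\le d/2 .
\]
To obtain it I look only at the lead-changing steps, which are ``$A$ alone'' ($g\to g+1$, weight $(m-g)s$) and ``$B$ alone'' ($g\to g-1$, weight $(s+g)m$). Since $(s+g)m\ge (m-g)s$, the embedded lead walk has nonpositive drift. It can therefore be coupled below a simple symmetric walk started at $g_0$, so that survival of the true walk implies survival of the symmetric one. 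The number of lead moves made while the age runs from $s_0$ to $a$ is a sum of Bernoulli variables with success probability of order $s/d$. Its mean is therefore of order $(a^2-s_0^2)/d$, and a Chernoff bound shows it is at least half its mean except with negligible probability. The classical estimate that a symmetric walk survives $N$ steps above $-1$ with probability at most $C(g_0+1)/\sqrt N$ then gives the display. The main obstacle of the whole proof is making this coupling and counting argument honest, since the number of lead moves and the survival event are dependent. I would handle this by conditioning on the full sequence of move types (tie-preserving versus lead-changing) and using that, given this sequence, the signs of the lead moves form an inhomogeneous walk with drift at most zero.

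With the survival bound in hand, the region $a\le d/2$ is a direct computation, using $\Delta\le 1/(d(m+1))\le 2/d^2$ there. The ages with $a-s_0\le d/\max(s_0,\sqrt d)$ contribute $O\bigl(d^{-2}\cdot d/\max(s_0,\sqrt d)\bigr)$. The remaining ages contribute
\[
        \frac{C}{d^{2}}\sum_{a\le d/2}\frac{\sqrt d}{a}\le C\frac{\log d}{d^{3/2}}.
\]
Both pieces are $\le C_{g_0}H(s_0,g_0)\bigl(s_0/d+\log d/\sqrt d\bigr)$, because $H(s_0,g_0)\asymp (g_0+1)/d$ when $s_0=o(d)$.

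For $a>d/2$ I apply the strong Markov property at the first time age $d/2$ is reached. The survival bound gives $p(d/2)\le C_{g_0}/\sqrt d$. From there the remaining defect is at most $\sum_{m\le d/2}(m+1)\cdot\frac{1}{d(m+1)}\cdot O(1)$; the per-age expected visit count again stays bounded because $G\le m$ and each age is left with probability bounded below. This shows the remaining defect is $O(1)$ times $\frac{1}{d}\sum_m\frac{1}{m+1}\cdot O(1)=O(\log d/d)$, at worst. Multiplying by $C/\sqrt d$ gives $O(\log d/d^{3/2})$, again within \eqref{eq:green-estimate}. If the per-age counting near the terminal corner is delicate, I would fall back on the total step bound $\tau\le 2d$ from Lemma~\ref{lem:self-loop-erasure} restricted to the last $d/2$ ages. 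Finally, the $o(H)$ consequence follows because $s_0/d\to0$ and $\log d/\sqrt d\to0$.
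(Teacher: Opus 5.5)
Your architecture matches the paper's: you slice by age, allow at most two expected visits per age level, use a ballot-type bound $\Prob_{s_0,g_0}(\tau_a<\tau)\lesssim\sqrt d/a$, and sum over ages at the cost of one factor $\log d$. The gap is in the step you yourself call the main obstacle, the survival bound, and your proposed fix for it does not work.

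\emph{Conditioning on types tilts the signs.} Conditioning on the full sequence of move types does not leave the signs as a walk with the original up-probabilities $p_+(s,g)\le 1/2$. In the coordinates $x=C_A$, $y=C_B$ one has $D=d^2-xy$ and
\[
\Prob(\text{lead change}\mid x,y)=\frac{d(x+y)-2xy}{d^2-xy}.
\]
Here $x+y$ is a function of the type sequence: a both-new step adds $2$ and a lead change adds $1$. At fixed $x+y$ this probability is strictly increasing in $|x-y|$. So conditioning on the types reweights sign paths by $\prod_n\Prob(\text{type}_n\mid\text{state}_n)$. This is a future-dependent Doob transform, and when many lead changes lie ahead it tilts the signs upward on the safe side $g>0$. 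Nothing then guarantees conditional drift $\le 0$.

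\emph{Conditioning on types does not decouple the count.} The type sequence does not determine the age, because ``$A$ alone'' keeps $s$ while ``$B$ alone'' increments it. So the number of lead moves before age $a$ stays entangled with the signs. Moreover, that count is not a sum of independent Bernoulli variables: each indicator is defined only once its age has been reached. The Chernoff step therefore also needs its own justification.

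The missing idea is that no conditioning is needed.
\begin{itemize}
\item Generate the sign of the $j$-th lead change from an i.i.d.\ uniform $V_j$, and drive a simple symmetric walk $Y$ with the same $V_j$. Since $p_+\le 1/2$, the lead after $j$ lead changes is pathwise $\le Y_j$. This gives the event inclusion $\{\tau_a<\tau,\ L_a\ge N\}\subseteq\{\min_{j\le N}Y_j\ge0\}$, however $L_a$ depends on the signs.
\item Bound the complementary piece $\Prob(\tau_a<\tau,\ L_a<N)$ by conditional Bernoulli domination. Given that age $r$ is reached, a lead change occurs there with probability $\ge r/d$, because $d\bigl((m-g)r+(r+g)m\bigr)-rD=m^2(r+g)\ge0$. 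Then apply Chernoff.
\item Set $N=\lfloor\mu_a/2\rfloor$ and add the two pieces. This is \cref{lem:bernoulli-domination,lem:survival}.
\end{itemize}

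Two smaller repairs are also needed.
\begin{itemize}
\item Your split at $a-s_0=d/\max(s_0,\sqrt d)$ is wrong when $s_0\gg\sqrt d$: then $\sqrt{d/(a^2-s_0^2)}\lesssim\sqrt d/a$ fails for $a-s_0\ll s_0$. Use the trivial survival bound for all $a<4\max\{s_0,\lceil\sqrt d\rceil\}$ instead; this is exactly where the $s_0/d$ term comes from.
\item For $a>d/2$, the bounded per-age visit count follows from $(m-g)s/D\le s/(d+s)\le 1/2$, using $(s+g)m\ge(m-g)s$. It does not follow from your $a/d$ bound. Your fallback via $\tau\le 2d$ would only give $O(d^{-1/2})$, which is too weak.
\end{itemize}
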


The proof uses two elementary auxiliary estimates: a survival estimate and an occupation estimate.  We first record the elementary coupling fact used in the survival estimate.

\begin{lemma}[Conditional Bernoulli domination]
\label{lem:bernoulli-domination}
Let $N\ge1$, and let $(\mathcal F_i)_{i=0}^{N}$ be a filtration.  For
$i=1,\dots,N$, let $R_i\in\mathcal F_{i-1}$ be a reachability event and let
$I_i$ be a $\{0,1\}$-valued random variable which is defined on $R_i$ and
measurable with respect to $\mathcal F_i$.  Suppose that, on $R_i$,
\[
        \Prob(I_i=1\mid \mathcal F_{i-1})\ge p_i
\]
for deterministic numbers $0\le p_i\le1$.

Define the completed variables
\[
        \widehat I_i :=
        \begin{cases}
        I_i, & \text{on }R_i,\\
        1,   & \text{on }R_i^c.
        \end{cases}
\]
Let $\xi_1,\dots,\xi_N$ be independent Bernoulli variables with
$\Prob(\xi_i=1)=p_i$.  Then, for every real $t$,
\begin{equation}\label{eq:bernoulli-domination}
        \Prob\left(\sum_{i=1}^{N}\widehat I_i<t\right)
        \le
        \Prob\left(\sum_{i=1}^{N}\xi_i<t\right).
\end{equation}
Consequently, if $A$ is any event such that $A\subseteq\bigcap_{i=1}^{N}R_i$,
then
\begin{equation}\label{eq:bernoulli-domination-reached}
        \Prob\left(A,\sum_{i=1}^{N}I_i<t\right)
        \le
        \Prob\left(\sum_{i=1}^{N}\xi_i<t\right).
\end{equation}
\end{lemma}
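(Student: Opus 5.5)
We prove the lemma by an explicit coupling with independent uniforms, which gives a pathwise comparison $\sum_i\widehat I_i\ge\sum_i\xi_i$.

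\emph{Step 1: a single completed variable.} Enlarge the probability space if necessary to carry i.i.d.\ uniform variables $V_1,\dots,V_N$ on $[0,1]$, independent of $\mathcal F_N$. Put
\[
q_i:=\Prob(\widehat I_i=1\mid\mathcal F_{i-1}).
\]
On $R_i$ we have $q_i\ge p_i$ by hypothesis. On $R_i^c$ we have $q_i=1$, since $\widehat I_i=1$ there and $R_i\in\mathcal F_{i-1}$. So $q_i\ge p_i$ almost surely.

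\emph{Step 2: thinning.} Define
\[
\xi_i:=\widehat I_i\,\Ind\{V_i\le p_i/q_i\},
\]
with the convention $p_i/q_i:=1$ when $q_i=0$. In that case $p_i=0$ as well, and $\widehat I_i=0$ almost surely on that event, so $\xi_i=0$. By construction $\xi_i\le\widehat I_i$ pointwise.

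Let $\mathcal G_i$ be the $\sigma$-field generated by $\mathcal F_i$ and $V_1,\dots,V_i$. Since $V_i$ is independent of $\mathcal F_N$ and of the earlier uniforms, $\widehat I_i$ and $V_i$ are conditionally independent given $\mathcal G_{i-1}$. Hence
\[
\Prob(\xi_i=1\mid\mathcal G_{i-1})=q_i\cdot\frac{p_i}{q_i}=p_i,
\]
which is deterministic. If needed, one first replaces $(\mathcal F_i)$ by the enlarged filtration $\mathcal F_i\vee\sigma(V_j:j\le N)$; the domination hypothesis survives this because the uniforms are independent of $\mathcal F_N$.

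Because each conditional law of $\xi_i$ given $\mathcal G_{i-1}$ is the fixed Bernoulli$(p_i)$ law, and $\xi_1,\dots,\xi_{i-1}$ are $\mathcal G_{i-1}$-measurable, induction on $i$ shows that $\xi_1,\dots,\xi_N$ are independent Bernoulli$(p_i)$.

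\emph{Step 3: conclusions.} From $\sum_i\xi_i\le\sum_i\widehat I_i$ pointwise,
\[
\Bigl\{\sum_i\widehat I_i<t\Bigr\}\subseteq\Bigl\{\sum_i\xi_i<t\Bigr\},
\]
and the probability of the right-hand event depends only on the joint law of the $\xi_i$. This gives \eqref{eq:bernoulli-domination}.

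For \eqref{eq:bernoulli-domination-reached}, note that on $A\subseteq\bigcap_i R_i$ we have $\widehat I_i=I_i$ for every $i$. Therefore
\[
\Bigl\{A,\ \sum_i I_i<t\Bigr\}\subseteq\Bigl\{\sum_i\widehat I_i<t\Bigr\},
\]
and \eqref{eq:bernoulli-domination} applies.

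The main point needing care is the filtration bookkeeping in Step 2. One must check that the conditional probability of $\widehat I_i=1$ is still at least $p_i$ after the auxiliary uniforms are added to the conditioning. One must also check that the thinned indicators are exactly independent, and not merely conditionally dominated. Both follow from the independence of the $V_i$ from $\mathcal F_N$.
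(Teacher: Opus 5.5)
Your proof is correct, but it takes a genuinely different route from the paper's.

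\emph{The paper's route.} The paper never constructs the $\xi_i$ on the same space. It defines the nonincreasing functions $\Phi_i(x)=\Prob\bigl(x+\sum_{j>i}\xi_j<t\bigr)$. From $q_i\ge p_i$ and the monotonicity of $\Phi_i$ it derives the one-step inequality $\E[\Phi_i(\Sigma_{i-1}+\widehat I_i)\mid\mathcal F_{i-1}]\le\Phi_{i-1}(\Sigma_{i-1})$. It then telescopes from $\E\Phi_N(\Sigma_N)=\Prob\bigl(\sum_i\widehat I_i<t\bigr)$ down to $\Phi_0(0)$.

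\emph{Your route.} You build an explicit monotone coupling by thinning each $\widehat I_i$ with an auxiliary uniform. This produces exactly independent Bernoulli$(p_i)$ variables with $\xi_i\le\widehat I_i$ pointwise.

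\emph{What each buys.} The paper's argument is lighter on measure-theoretic bookkeeping: it needs no enlargement of the probability space, no division by $q_i$, and no conditional-independence check, only the one-step inequality. Yours gives a stronger, pathwise statement: coordinatewise domination of the whole vector, and hence of every increasing functional of $(\widehat I_1,\dots,\widehat I_N)$, not just of the sum. The cost is exactly the checks you list, and your verification with $\mathcal G_{i-1}=\mathcal F_{i-1}\vee\sigma(V_1,\dots,V_{i-1})$ handles them correctly.

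\emph{One aside to delete.} The parenthetical suggestion to replace $\mathcal F_i$ by $\mathcal F_i\vee\sigma(V_j:j\le N)$ is not harmless. With that filtration, $V_i$ would be measurable with respect to the conditioning $\sigma$-field at step $i$. Then $\Prob(\xi_i=1\mid\cdot)$ would equal $q_i\Ind\{V_i\le p_i/q_i\}$ rather than the deterministic $p_i$, and the independence induction would fail. Your argument works precisely because only $V_1,\dots,V_{i-1}$ are revealed before step $i$, which is what your main computation uses.
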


\begin{proof}
The completed variables satisfy
\[
        \Prob(\widehat I_i=1\mid \mathcal F_{i-1})\ge p_i
        \qquad(1\le i\le N).
\]
Indeed, on $R_i$ this is the assumed inequality, while on $R_i^c$ we have
$\widehat I_i=1$.

We prove the lower-tail domination by backward induction.  For
$0\le i\le N$, set
\[
        \Phi_i(x):=
        \Prob\left(x+\sum_{j=i+1}^{N}\xi_j<t\right),
\]
with the convention that the empty sum is zero.  The function $\Phi_i(x)$ is
nonincreasing in $x$.  Write
\[
        \Sigma_i:=\widehat I_1+\cdots+\widehat I_i.
\]
Let
\[
        q_i:=\Prob(\widehat I_i=1\mid \mathcal F_{i-1}).
\]
Since $q_i\ge p_i$ and $\Phi_i$ is nonincreasing,
\[
\begin{aligned}
\E\!\left[\Phi_i(\Sigma_{i-1}+\widehat I_i)\mid \mathcal F_{i-1}\right]
&=
q_i\Phi_i(\Sigma_{i-1}+1)+(1-q_i)\Phi_i(\Sigma_{i-1})  \\
&\le
p_i\Phi_i(\Sigma_{i-1}+1)+(1-p_i)\Phi_i(\Sigma_{i-1})  \\
&=
\Phi_{i-1}(\Sigma_{i-1}).
\end{aligned}
\]
Iterating this inequality for $i=N,N-1,\dots,1$ gives
\[
        \Prob\left(\sum_{i=1}^{N}\widehat I_i<t\right)
        =
        \E\,\Phi_N(\Sigma_N)
        \le
        \Phi_0(0)
        =
        \Prob\left(\sum_{i=1}^{N}\xi_i<t\right).
\]
Finally, if $A\subseteq\bigcap_i R_i$, then on $A$ one has
$\widehat I_i=I_i$ for every $i$.  Hence
\[
        \left\{A,\sum_i I_i<t\right\}
        \subseteq
        \left\{\sum_i\widehat I_i<t\right\},
\]
which proves \eqref{eq:bernoulli-domination-reached}.
\end{proof}

\begin{lemma}[Lead-change survival bound]\label{lem:survival}
Fix $g_0\ge0$, and define
\[
        \tau_a:=\inf\{n:S_n\ge a\}.
\]
For every integer $a$ with $\max\{s_0,1\}\le a\le d$ one has the trivial bound
\[
        \Prob_{s_0,g_0}(\tau_a<\tau)\le 1.
\]
Moreover, there is a constant $C_{g_0}$ such that, whenever
\[
        a\ge 4\max\{s_0,\lceil\sqrt d\rceil\},
\]
one has
\begin{equation}\label{eq:survival-bound}
        \Prob_{s_0,g_0}(\tau_a<\tau)
        \le
        C_{g_0}\frac{\sqrt d}{a}.
\end{equation}
\end{lemma}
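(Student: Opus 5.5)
The plan is to show that, to reach age $a$ alive, the lead must survive roughly $a^2/d$ one-sided moves, each of which is at best a fair coin flip for $A$. A symmetric-walk survival estimate then costs order $(g_0+1)\sqrt d/a$.

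\textbf{Reduction to $a\le d/2$.} The events $\{\tau_a<\tau\}$ decrease in $a$. So if $a>d/2$ we may replace $a$ by $a'=\lfloor d/2\rfloor$, which still satisfies $a'\ge 4\max\{s_0,\lceil\sqrt d\rceil\}$ for large $d$ because $s_0=o(d)$. This costs only a factor $2$ in the constant, so we assume $a\le d/2$. On $\{\tau_a<\tau\}$ we have $m=d-s\ge d/2\ge s$ throughout, and no absorption occurs before age $a$. In particular success, which needs $g=d-s\ge d/2$, has not occurred.

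\textbf{Step 1: many one-sided moves.} For each level $j$ with $a/2\le j<a$ (note $s_0\le a/4$), consider the unique nonself transition that takes $S$ from $j$ to $j+1$. It is either a ``both new'' move or a ``$B$ alone'' move. Conditional on the past and on the transition being $s$-increasing, the ``$B$ alone'' option has probability
\[
\frac{(s+g)m}{(m-g)m+(s+g)m}\ \ge\ \frac{s}{s+m}\ \ge\ \frac{j}{2d}\ \ge\ \frac{a}{4d}.
\]
Let $I_j$ indicate that this crossing is one-sided, and let $R_j$ be the event that level $j$ is reached before $\tau$. Lemma~\ref{lem:bernoulli-domination} with $p_j=a/(4d)$ and $N\approx a/2$ then shows that, on $\{\tau_a<\tau\}$, the number $L$ of one-sided moves satisfies
\[
\Prob(\tau_a<\tau,\ L<a^2/(16d))\le \Prob(\mathrm{Bin}(a/2,a/(4d))<a^2/(16d)).
\]
The binomial mean is $a^2/(8d)$, so this lower tail is at most $e^{-ca^2/d}$ by Chernoff. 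Since $a^2/d\ge16$, that is $\le C\sqrt d/a$.

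\textbf{Step 2: survival of the lead walk.} Index the lead $G$ by the successive one-sided moves; ``both new'' moves leave $G$ unchanged. At a one-sided move the probability of an up-step is
\[
\frac{(m-g)s}{(m-g)s+(s+g)m}\le \frac12,
\]
because $m\ge s$ and $m-g\le m$. By a step-by-step coupling with uniform variables, the embedded lead process is dominated from above by a simple symmetric random walk started at $g_0$, for as long as the chain is alive. Survival of the chain requires $G\ge0$ throughout. Hence the probability that the chain survives at least $\ell=\lceil a^2/(16d)\rceil$ one-sided moves is at most the probability that the symmetric walk from $g_0$ stays $\ge0$ for $\ell$ steps. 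By the reflection principle this is $\le C(g_0+1)/\sqrt{\ell}\le C'_{g_0}\sqrt d/a$. Adding the bounds from Steps 1 and 2 gives \eqref{eq:survival-bound}.

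\textbf{Main obstacle.} The delicate point is making the coupling in Step 2 rigorous on the killed chain. The domination must hold only up to $\tau$, and the filtration must combine the $s$-level crossings of Step 1 with the one-sided-move clock of Step 2. I would handle this by running the symmetric walk on an enlarged probability space, driven by the same uniform variables that decide each one-sided move. That the realized lead stays below the walk then follows by induction on the number of one-sided moves, exactly in the ``completed variables'' style of Lemma~\ref{lem:bernoulli-domination}.
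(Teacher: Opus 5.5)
Your proposal is correct and is essentially the paper's argument: a Bernoulli-domination/Chernoff lower bound of order $a^2/d$ on the number of lead changes made before age $a$, then pathwise domination of the embedded lead by a simple symmetric walk (using $p_+\le 1/2$) and the reflection-principle ballot bound. The only real difference is which steps you count: you count one-sided crossings of the levels $j\in[a/2,a)$ at the uniform rate $a/(4d)$, while the paper counts age stages $r\in[s_0,a)$ that contain any lead change, at rate $r/d$.

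Two small cleanups. First, the reduction to $a\le d/2$ is unnecessary, since $(m-g)s\le ms\le(s+g)m$ and the conditional probability $(s+g)/d\ge s/d$ that an $s$-increasing move is $B$-alone both hold at every interior state; dropping it removes your reliance on $s_0=o(d)$. Second, set $I_j:=1$ when the age-$j$ stage ends with $A$ completing, so that $\Prob(I_j=1\mid\mathcal F_{j-1})\ge p_j$ holds on all of $R_j$, as Lemma~\ref{lem:bernoulli-domination} requires.
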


\begin{proof}
The trivial bound is immediate.  It remains to prove \eqref{eq:survival-bound}; throughout the proof assume $a\ge4\max\{s_0,\lceil\sqrt d\rceil\}$.

Let
\[
        L_a:=
        \sum_{n\ge0}
        \Ind_{\{n<\tau_a\wedge\tau\}}
        \Ind_{\{|G_{n+1}-G_n|=1\}} .
\]
Thus $L_a$ counts all lead-changing transitions whose pre-transition state occurs before $\tau_a\wedge\tau$; in particular, it includes a killing transition from $g=0$ to $g=-1$ when that transition is made before the stopped state.

For an age level $r$, define
\[
        \sigma_r:=\inf\{n:S_n=r\}.
\]
Conditional on the chain reaching age $r$ before absorption, the probability that the next changed-state transition is lead-changing is, at state $(r,g)$,
\[
        \frac{(m-g)r+(r+g)m}{D(r,g)}.
\]
Since
\[
        d\bigl((m-g)r+(r+g)m\bigr)-rD(r,g)
        =m^2(r+g)\ge0,
\]
this probability is at least $r/d$.  Hence, conditional on first reaching age $r$ before absorption, the probability of at least one lead-changing transition during the age-$r$ stage is at least $r/d$.

Enumerate the deterministic age levels by
\[
        r_i:=s_0+i-1,\qquad 1\le i\le a-s_0.
\]
Let
\[
        R_i:=\{\sigma_{r_i}<\tau\}.
\]
On $R_i$, let
\[
        \eta_i:=\inf\{n>\sigma_{r_i}: S_n\ne r_i\}\wedge \tau
\]
be the end of the age-$r_i$ stage, with absorption included if it occurs before the age changes.  Let $I_i$ be the indicator of the event that some transition with pre-transition time
\[
        \sigma_{r_i}\le n<\eta_i
\]
changes the lead, i.e. has $|G_{n+1}-G_n|=1$.  This convention includes a transition into absorption if that transition occurs while the chain is at age $r_i$.

We spell out the filtration used for the domination step.  Let $\mathcal H_0$ be the initial sigma-field.  For each $i$, let $\mathcal H_i$ be the sigma-field generated by the stopped path up to the end of the age-$r_i$ stage, with a fixed cemetery completion on $R_i^c$.  Equivalently, on $R_i$ the sigma-field $\mathcal H_i$ contains the path up to time $\eta_i$, while on $R_i^c$ no further randomness is revealed.  Then $R_i\in\mathcal H_{i-1}$ and $I_i$ is $\mathcal H_i$-measurable.  Conditional on $\mathcal H_{i-1}$ and on $R_i$, the first changed transition made from age $r_i$ is lead-changing with probability at least $r_i/d$.  Therefore the probability that the whole age-$r_i$ stage contains at least one lead-changing transition is also at least $r_i/d$:
\[
        \Prob(I_i=1\mid \mathcal H_{i-1})\ge \frac{r_i}{d}
        \qquad\text{on }R_i.
\]

Apply Lemma~\ref{lem:bernoulli-domination} with the stage filtration
$\mathcal F_i=\mathcal H_i$, with $p_i=r_i/d$, and with
$A=\{\tau_a<\tau\}$.  On $A$, every age $r_i=s_0,\dots,a-1$ is reached before absorption, and the corresponding age-stage events are disjoint.  Therefore
\[
        L_a\ge \sum_{i=1}^{a-s_0} I_i
\]
on $A$.  Hence, for independent Bernoulli variables $(\xi_r)_{r=s_0}^{a-1}$ with
$\Prob(\xi_r=1)=r/d$ and
\[
        B_a:=\sum_{r=s_0}^{a-1}\xi_r,
\]
we have, for every $t\ge0$,
\[
        \Prob(\tau_a<\tau,\,L_a<t)
        \le
        \Prob\left(B_a<t\right).
\]

Its mean satisfies
\[
        \mu_a:=\E B_a=\frac{1}{d}\sum_{r=s_0}^{a-1}r
        \ge \frac{a^2}{4d},
\]
for $a\ge 4s_0$.  Chernoff's bound gives
\begin{equation}\label{eq:chernoff-L}
        \Prob(\tau_a<\tau,\,L_a<\mu_a/2)
        \le
        \exp(-c\mu_a).
\end{equation}

Now look only at the lead-changing transitions.  At an interior state $(s,g)$, conditional on a lead change, the probability of an upward lead change is
\[
        p_+(s,g)=\frac{(m-g)s}{(m-g)s+(s+g)m}.
\]
Since
\[
        (s+g)m-(m-g)s=gd\ge0,
\]
we have $p_+(s,g)\le1/2$.  The coupling is by common uniforms: at each lead-changing step, use the same uniform variable to decide whether the coupon-collector lead increases with probability $p_+(s,g)$ and whether an auxiliary simple symmetric walk increases with probability $1/2$.  Since $p_+(s,g)\le1/2$ at every interior state, the lead process at lead-changing times is pathwise dominated by the simple symmetric walk until absorption.  More explicitly, if $G^{\rm lc}_j$ is the coupon-collector lead after the $j$th lead-changing transition and $Y_j$ is the coupled simple symmetric walk, then $G^{\rm lc}_j\le Y_j$ for every $j$ before absorption.  Thus, whenever the coupon-collector path survives for at least $N$ lead-changing transitions, one must have
\[
        \min_{0\le j\le N}Y_j\ge0.
\]

For a simple symmetric random walk $Y$ started at $g_0$, the reflection principle gives the standard ballot estimate; see, for example, \cite[Ch.~III]{Feller}
\begin{equation}\label{eq:ssrw-survival}
        \Prob\left(\min_{0\le j\le N}Y_j\ge0\right)
        \le \frac{C_{g_0}}{\sqrt N}.
\end{equation}
Indeed, this follows by summing the reflected kernel
\[
        \Prob_{g_0}(Y_N=y)-\Prob_{-g_0-2}(Y_N=y),
        \qquad y\ge0.
\]
Combining \eqref{eq:chernoff-L} and \eqref{eq:ssrw-survival} with $N=\lfloor\mu_a/2\rfloor$ gives
\[
\begin{aligned}
\Prob_{s_0,g_0}(\tau_a<\tau)
&\le
\Prob_{s_0,g_0}(\tau_a<\tau,\,L_a<N)
+
\Prob_{s_0,g_0}(\tau_a<\tau,\,L_a\ge N)\\
&\le
\exp(-c\mu_a)+
\Prob\left(\min_{0\le j\le N}Y_j\ge0\right)\\
&\le
\exp(-c\mu_a)+\frac{C_{g_0}}{\sqrt{\mu_a}}
\le
C_{g_0}\frac{\sqrt d}{a}.
\end{aligned}
\]
This proves \eqref{eq:survival-bound}.
\end{proof}

\begin{remark}
Only the nontrivial range $a\ge4\max\{s_0,\lceil\sqrt d\rceil\}$ of Lemma~\ref{lem:survival} is used below. Ages below this threshold are handled separately by the trivial bound and the occupation estimate.
\end{remark}

\begin{lemma}[Occupation bound]\label{lem:occupation}
Let $I=[a,b)\cap\mathbb Z$, with $s_0\le a<b\le d$.  The endpoints $a,b$ need not be integers.  Then
\begin{equation}\label{eq:occupation-bound}
        \E_{s_0,g_0}\sum_{n<\tau}\Ind_{\{S_n\in I\}}
        \le
        2(b-a+1)\Prob_{s_0,g_0}(\tau_a<\tau).
\end{equation}
\end{lemma}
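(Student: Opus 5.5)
Before time $\tau_a$ the chain has $S_n<a$, so it is never in $I$. Hence only times at or after $\tau_a$, and before $\tau$, contribute to the occupation count. The plan is to use the strong Markov property at $\tau_a$ to reduce to a bound on how many changed-state transitions the chain can spend at ages in $I$, uniformly over starting states. On $\{\tau_a<\tau\}$ the chain sits at some state $(S_{\tau_a},G_{\tau_a})$ with $S_{\tau_a}\ge a$, and
\[
        \E_{s_0,g_0}\sum_{n<\tau}\Ind_{\{S_n\in I\}}
        =\E_{s_0,g_0}\Bigl[\Ind_{\{\tau_a<\tau\}}\,
        \E_{(S_{\tau_a},G_{\tau_a})}\sum_{n<\tau}\Ind_{\{S_n\in I\}}\Bigr].
\]
It therefore suffices to show that, from any interior state $(s,g)$,
\[
        \E_{(s,g)}\sum_{n<\tau}\Ind_{\{S_n\in I\}}\le 2(b-a+1).
\]

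For this uniform bound, note that $S$ is nondecreasing. The count therefore splits as a sum over integer ages $r\in I$ of the expected number of transitions made while at age $r$. There are at most $b-a+1$ such ages, so it is enough to show that each age stage has expected length at most $2$.

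At age $r$ the only moves that keep the age fixed are the $A$-alone transitions $(r,g)\to(r,g+1)$. Every other move either increases the age or absorbs the chain. The probability of an $A$-alone move is
\[
        \frac{(m-g)r}{D(r,g)},\qquad D(r,g)=(m-g)d+(r+g)m .
\]
If this is at most $1/2$, the stage length is stochastically dominated by a geometric variable with success probability $1/2$, whose mean is $2$.

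The main obstacle is that $(m-g)r/D$ need not be $\le 1/2$. For example, with $g=0$ and $r$ close to $d$, it is close to $r/(d+r)$, which is still at most $1/2$; with $g>0$ the term $(r+g)m$ only adds to the denominator. Indeed
\[
        D-2(m-g)r=(m-g)(d-2r)+(r+g)m=(m-g)(m-r)+(r+g)m .
\]
When $r>m$, the first term is negative and the inequality is not automatic. Checking again: $(m-g)(m-r)+rm+gm=m^2-gm-rm+gr+rm+gm=m^2+gr\ge0$. So the probability is at most $1/2$ in all cases, with no extra hypothesis needed.

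Combining the geometric per-age bound with the strong Markov step gives the factor $2(b-a+1)$ times $\Prob_{s_0,g_0}(\tau_a<\tau)$. This is \eqref{eq:occupation-bound}. If $a\le s_0$ is effectively allowed, then $\tau_a=0$ and the probability equals $1$, so the same argument applies.
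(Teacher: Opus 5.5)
Your proof is correct and is essentially the paper's argument. The paper shows that the probability of increasing $S$ is $md/(md+(m-g)s)\ge d/(d+s)\ge 1/2$, which is the same inequality as your $D-2(m-g)r=m^2+gr\ge 0$; it then bounds each age stage by a geometric variable of mean $2$ and multiplies by the probability of reaching age $a$, and your strong Markov step at $\tau_a$ just makes that last step explicit. You should cut the self-retracting ``main obstacle'' paragraph, since the identity $m^2+gr\ge 0$ settles the bound immediately.
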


\begin{proof}
The coordinate $S_n$ is nondecreasing.  At a state $(s,g)$ with $m=d-s$, the probability that the next changed transition increases $S$ is
\[
        \frac{(m-g)m+(s+g)m}{D(s,g)}
        =\frac{md}{md+(m-g)s}.
\]
Since $m-g\le m$, this probability is at least
\[
        \frac{md}{md+ms}
        =\frac{d}{d+s}
        \ge \frac{1}{2}.
\]
Therefore, conditional on reaching a fixed age level $s$, the expected number of visits to that age before moving to $s+1$ or being absorbed is at most $2$.  Summing over the at most $b-a+1$ integer age levels in $I$ and multiplying by the probability of reaching age $a$ proves the claim.
\end{proof}

\begin{proof}[Proof of Lemma~\ref{lem:green}]
We split the state space into an initial range, a middle range, and a terminal range.  Throughout, constants may depend on the fixed initial lead $g_0$.

Let
\[
        A_0:=4\max\{s_0,\lceil\sqrt d\rceil\}.
\]
Since $s_0=o(d)$, for all sufficiently large $d$ we have $A_0\le d/4$.  All estimates in the rest of the proof are taken along this large-$d$ range.

\smallskip
\noindent\emph{Initial range.}
For $s_0\le S_n<A_0$, we have $m=d-S_n\ge 3d/4$, and therefore by \eqref{eq:Delta-simple-bound},
\[
        \Delta(S_n,G_n)\le \frac{4}{3d^2}.
\]
By Lemma~\ref{lem:occupation}, with the trivial survival bound, the expected number of visits to this range is $O(A_0)$.  Hence the initial contribution is
\[
        O\left(\frac{A_0}{d^2}\right)
        =O\left(\frac{s_0+\sqrt d}{d^2}\right).
\]
Since $H(s_0,g_0)=(g_0+1)/(d-s_0+1)\asymp_{g_0} d^{-1}$, this is
\begin{equation}\label{eq:initial-contrib}
        O_{g_0}\left(H(s_0,g_0)\left(\frac{s_0}{d}+\frac{1}{\sqrt d}\right)\right).
\end{equation}

\smallskip
\noindent\emph{Middle range.}
We next cover $A_0\le S_n<d/2$ by truncated dyadic intervals
\[
        I_j=[a_j,\min\{2a_j,d/2\})\cap\mathbb Z,
        \qquad
        a_j=2^jA_0,
\]
for those $j$ with $a_j<d/2$.  These intervals are disjoint and cover the whole middle range; the truncation handles the endpoint when $d/2$ is not an exact dyadic multiple of $A_0$.  On each $I_j$, $m\ge d/2$, so again
\[
        \Delta\le \frac{2}{d^2}.
\]
By Lemmas~\ref{lem:occupation} and~\ref{lem:survival}, using the nontrivial survival bound since $a_j\ge A_0\ge4\max\{s_0,\lceil\sqrt d\rceil\}$,
\[
        \E\sum_{n<\tau}\Ind_{\{S_n\in I_j\}}
        \le C(a_j+1)\frac{\sqrt d}{a_j}
        \le C\sqrt d.
\]
Thus each middle annulus contributes at most
\[
        \frac{C\sqrt d}{d^2}.
\]
There are $O(\log d)$ such annuli, so the middle contribution is
\begin{equation}\label{eq:middle-contrib}
        O\left(\frac{\log d}{d^{3/2}}\right)
        =O_{g_0}\left(H(s_0,g_0)\frac{\log d}{\sqrt d}\right).
\end{equation}

\smallskip
\noindent\emph{Terminal range.}
It remains to treat $S_n\ge d/2$.  First separate the final age $s=d-1$.  By \eqref{eq:Delta-simple-bound}, Lemma~\ref{lem:occupation}, and Lemma~\ref{lem:survival},
\[
        \E\sum_{n<\tau}\Delta(S_n,G_n)\Ind_{\{S_n=d-1\}}
        \le
        \frac{1}{2d}\,
        \E\sum_{n<\tau}\Ind_{\{S_n=d-1\}}
        \le
        \frac{C_{g_0}}{d}\Prob_{s_0,g_0}(\tau_{d-1}<\tau)
        =O_{g_0}(d^{-3/2}).
\]
For the remaining terminal ages, use dyadic intervals in the missing-coupon variable $m=d-s$.  For dyadic numbers $M_j=2^j$ with $1\le M_j\le d/2$, set
\[
        J_j:=\{s\in\mathbb Z:d/2\le s\le d-2,
        \ M_j\le d-s<2M_j\}.
\]
Empty intervals are ignored.  The nonempty intervals $J_j$ cover every integer $s$ with $d/2\le s\le d-2$: writing $m=d-s$, choose $M_j=2^{\lfloor\log_2 m\rfloor}$, so $M_j\le m<2M_j$.  On $J_j$, $m+1\ge M_j$, whence
\[
        \Delta\le \frac{1}{dM_j}.
\]
The length of $J_j$ is at most $M_j$.  To enter a nonempty $J_j$, the chain must reach its left endpoint
\[
        a_j:=\min J_j\ge d/2.
\]
For all sufficiently large $d$, this threshold lies in the nontrivial range of Lemma~\ref{lem:survival}, and hence
\[
        \Prob_{s_0,g_0}(\tau_{a_j}<\tau)
        \le C_{g_0}\frac{\sqrt d}{a_j}
        \le \frac{C_{g_0}}{\sqrt d}.
\]
Lemma~\ref{lem:occupation} therefore gives
\[
        \E\sum_{n<\tau}\Ind_{\{S_n\in J_j\}}
        \le \frac{C_{g_0}M_j}{\sqrt d}.
\]
Therefore the contribution of $J_j$ is at most
\[
        \frac{C_{g_0}}{d\sqrt d}.
\]
There are $O(\log d)$ terminal annuli, so the terminal contribution is
\begin{equation}\label{eq:terminal-contrib}
        O_{g_0}\left(\frac{\log d}{d^{3/2}}\right)
        =O_{g_0}\left(H(s_0,g_0)\frac{\log d}{\sqrt d}\right).
\end{equation}

Combining \eqref{eq:initial-contrib}, \eqref{eq:middle-contrib}, and \eqref{eq:terminal-contrib} proves \eqref{eq:green-estimate}.
\end{proof}

\section{Completion of the lead-excursion estimate}

We need one more negligible estimate: the probability of simultaneous terminal completion during the lead excursion.

\begin{lemma}[Terminal tie is negligible]\label{lem:terminal-tie}
Fix $g_0\ge0$.  If $0\le s_0<d-g_0$ and $s_0=o(d)$, then
\[
        \Prob_{s_0,g_0}\bigl((S_\tau,G_\tau)=(d,0)\bigr)
        =O_{g_0}(d^{-3/2})
        =o(H(s_0,g_0)).
\]
\end{lemma}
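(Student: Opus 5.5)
To reach $(d,0)$, the chain must pass through a state at age $s=d-1$ with $g\in\{0,1\}$. The state $(d-1,1)$ lies on the success boundary, since there $g=d-s=1$. So the only interior predecessor of $(d,0)$ is $(d-1,0)$, via the ``both get new coupons'' move. I will therefore bound
\[
\Prob_{s_0,g_0}\bigl((S_\tau,G_\tau)=(d,0)\bigr)\le \Prob_{s_0,g_0}(\tau_{d-1}<\tau),
\]
where $\tau_{d-1}$ is the hitting time of age $d-1$ before absorption.

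Next I apply the nontrivial range of Lemma~\ref{lem:survival} with $a=d-1$. Because $s_0=o(d)$, for all sufficiently large $d$ we have $d-1\ge 4\max\{s_0,\lceil\sqrt d\rceil\}$. This gives
\[
\Prob_{s_0,g_0}(\tau_{d-1}<\tau)\le C_{g_0}\frac{\sqrt d}{d-1}=O_{g_0}(d^{-1/2}).
\]
That bound alone is too weak; I need an additional factor of order $1/d$.

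The extra factor comes from the last step. At the tie $(d-1,0)$ we have $m=1$ and $s=d-1$. The three transition weights are therefore $1\cdot1=1$ for the move to $(d,0)$, $1\cdot(d-1)=d-1$ for $A$ alone (the move to $(d-1,1)$, which is success), and $(d-1)\cdot 1=d-1$ for $B$ alone (killing). So each time the chain sits at $(d-1,0)$, its next changed transition goes to $(d,0)$ with probability $1/(2d-1)$. Moreover, every move out of $(d-1,0)$ is absorbing, so the chain visits $(d-1,0)$ at most once. The state $(d-1,0)$ can only be entered from age $d-2$ (from $(d-2,0)$ or $(d-2,1)$), or as the initial state, which is excluded for large $d$.

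By the strong Markov property at the hitting time of $(d-1,0)$,
\[
\Prob_{s_0,g_0}\bigl((S_\tau,G_\tau)=(d,0)\bigr)=\frac{1}{2d-1}\Prob_{s_0,g_0}\bigl(\text{hit }(d-1,0)\text{ before }\tau\bigr)\le\frac{1}{2d-1}\Prob_{s_0,g_0}(\tau_{d-1}<\tau)=O_{g_0}(d^{-3/2}).
\]
Finally, $H(s_0,g_0)=(g_0+1)/(d-s_0+1)\asymp_{g_0} d^{-1}$ because $s_0=o(d)$, so $d^{-3/2}=o(H(s_0,g_0))$.

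The only point needing care is to confirm that Lemma~\ref{lem:survival} really does apply at $a=d-1$, that is, that its threshold condition holds for large $d$ under $s_0=o(d)$. It does, so I expect no real obstacle.
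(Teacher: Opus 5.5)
Your proposal is correct and follows the paper's proof essentially step for step: $(d-1,0)$ is the only interior predecessor of $(d,0)$, the weights $1,\,d-1,\,d-1$ there give exit probability $1/(2d-1)$ into $(d,0)$, and Lemma~\ref{lem:survival} at $a=d-1$ supplies the factor $C_{g_0}\sqrt d/(d-1)$. Your strong-Markov equality, using that $(d-1,0)$ is visited at most once, is a slightly more explicit form of the paper's inequality, but the argument is the same.
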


\begin{proof}
The only interior state from which the chain can enter the simultaneous terminal point $(d,0)$ is $(d-1,0)$.  Indeed, at age $d-1$ the safe region has only $g=0$ as an interior lead; $g=1$ is already the success boundary.

At $(d-1,0)$, the changed-state transition weights are
\[
        1,\quad d-1,\quad d-1,
\]
corresponding respectively to simultaneous completion, $A$ alone completing, and $B$ alone completing.  Hence the conditional probability of entering $(d,0)$ from $(d-1,0)$ is
\[
        \frac{1}{2d-1}.
\]
Therefore
\[
        \Prob_{s_0,g_0}\bigl((S_\tau,G_\tau)=(d,0)\bigr)
        \le \frac{1}{2d-1}\Prob_{s_0,g_0}(\tau_{d-1}<\tau).
\]
For all sufficiently large $d$, the threshold $d-1$ lies in the nontrivial range of Lemma~\ref{lem:survival}; hence
\[
        \Prob_{s_0,g_0}(\tau_{d-1}<\tau)
        \le C_{g_0}\frac{\sqrt d}{d-1}.
\]
Combining the last two displays gives the asserted $O_{g_0}(d^{-3/2})$ bound.  Since $s_0=o(d)$ and $g_0$ is fixed,
\[
        H(s_0,g_0)=\frac{g_0+1}{d-s_0+1}\asymp_{g_0} d^{-1},
\]
so the terminal-tie probability is $o(H(s_0,g_0))$.
\end{proof}

\begin{proof}[Proof of Theorem~\ref{thm:lead-excursion}]
The process
\[
        M_n:=H(S_{n\wedge\tau},G_{n\wedge\tau})
        -\sum_{r<n\wedge\tau}\Delta(S_r,G_r)
\]
is a martingale.  Since Lemma~\ref{lem:self-loop-erasure} gives $\tau\le 2d$ almost surely, optional stopping applies directly at $\tau$ and yields
\begin{equation}\label{eq:optional-H}
        \E_{s_0,g_0}H(S_\tau,G_\tau)
        =H(s_0,g_0)+
        \E_{s_0,g_0}\sum_{n<\tau}\Delta(S_n,G_n).
\end{equation}
At the success boundary, $H=1$.  At the unsafe boundary, $H=0$.  At the simultaneous terminal point $(d,0)$, $H=1$, but that event does not count as a win for $A$.  Therefore
\[
        \E_{s_0,g_0}H(S_\tau,G_\tau)
        =U_d(s_0,g_0)+
        \Prob_{s_0,g_0}\bigl((S_\tau,G_\tau)=(d,0)\bigr).
\]
Using Lemmas~\ref{lem:green} and~\ref{lem:terminal-tie} in \eqref{eq:optional-H}, we obtain
\[
        U_d(s_0,g_0)=H(s_0,g_0)
        \left(1+O_{g_0}\left(\frac{s_0}{d}+\frac{\log d}{\sqrt d}\right)\right),
\]
as claimed.
\end{proof}

\section{Proof of the main theorem}

We now combine the tie-skeleton formula with the lead-excursion estimate.

\begin{proof}[Proof of Theorem~\ref{thm:main}]
Recall from \eqref{eq:tie-expectation} that
\[
        E_d=\frac{1}{2}\sum_{k=1}^{d-1}\pi_{d,k}w_{d-k},
\]
where $w_{d-k}=U_d(k,1)$ in $(s,g)$ coordinates.

Choose a fixed constant $R$ so large that the tail bound in \eqref{eq:first-break-tail} is $o(d^{-1})$, and set
\[
        B_d:=R\sqrt{d\log d}.
\]
Then, by Lemma~\ref{lem:first-break},
\begin{equation}\label{eq:main-tail-small}
        \sum_{k>B_d}\pi_{d,k}=o(d^{-1}).
\end{equation}
Since $0\le w_{d-k}\le1$, the contribution of $k>B_d$ to \eqref{eq:tie-expectation} is $o(d^{-1})$.

On the complementary range $1\le k\le B_d$, we have $k=o(d)$ uniformly.  Therefore Theorem~\ref{thm:lead-excursion}, applied with $s_0=k$ and $g_0=1$, gives
\[
        w_{d-k}=U_d(k,1)
        =\frac{2}{d-k+1}
        \left(1+O\left(\frac{B_d}{d}+\frac{\log d}{\sqrt d}\right)\right)
        =\frac{2}{d}(1+o(1))
\]
uniformly for $1\le k\le B_d$.  Hence
\[
        E_d
        =\frac{1}{2}\sum_{k\le B_d}\pi_{d,k}\frac{2}{d}(1+o(1))
        +o(d^{-1}).
\]
The probability of no one-sided break before simultaneous completion is $O(e^{-cd})$, again by Lemma~\ref{lem:first-break}; hence
\[
        \sum_{k=1}^{d-1}\pi_{d,k}=1+o(1).
\]
Together with \eqref{eq:main-tail-small}, this implies
\[
        \sum_{k\le B_d}\pi_{d,k}=1+o(1).
\]
Consequently
\[
        E_d\sim \frac{1}{d}.
\]
Finally, $b_d=2E_d$, so
\[
        b_d\sim \frac{2}{d}.
\]
\end{proof}

\begin{corollary}[Conditional first-break law and moments]
\label{cor:conditional-first-break}
Let $K_d$ be the extended first-break level from
Lemma~\ref{lem:first-break}, and let
\[
        \mathcal B_d:=\{\text{the ultimate winner was never behind}\}.
\]
Then, conditional on $\mathcal E_A$,
\[
        \frac{K_d}{\sqrt d}\Rightarrow X,
        \qquad
        \Prob(X\in \dd x)=2x e^{-x^2}\Ind_{\{x>0\}}\dd x .
\]
The same convergence holds conditional on $\mathcal B_d$.

Moreover, for every fixed $p>0$,
\[
        \E\left[\left(\frac{K_d}{\sqrt d}\right)^p\right]
        \longrightarrow
        \Gamma\left(1+\frac p2\right),
\]
and
\[
        \E\left[
        \left(\frac{K_d}{\sqrt d}\right)^p
        \,\middle|\,\mathcal E_A\right]
        \longrightarrow
        \Gamma\left(1+\frac p2\right).
\]
The same conditional moment convergence holds with $\mathcal E_A$ replaced by
$\mathcal B_d$.  In particular,
\[
        \E K_d\sim \frac{\sqrt\pi}{2}\sqrt d,
        \qquad
        \operatorname{Var}(K_d)\sim
        \left(1-\frac\pi4\right)d,
\]
and the same expectation and variance asymptotics hold conditional on
$\mathcal E_A$ and conditional on $\mathcal B_d$.
\end{corollary}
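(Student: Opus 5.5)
The plan is to express the conditional law of $K_d$ given $\mathcal E_A$ through the tie-skeleton decomposition, and then transfer the unconditional Rayleigh limit and moments using the fact that the post-break success probability is almost flat in $k$. By the derivation of \eqref{eq:tie-skeleton}, for $1\le k\le d-1$ the event $\mathcal E_A\cap\{K_d=k\}$ requires a break at level $k$ in favour of $A$ (probability $\pi_{d,k}/2$), followed by success from $(k,1)$. On $\{K_d=d\}$ the event $\mathcal E_A$ fails. Hence
\[
        \Prob(\mathcal E_A, K_d=k)=\tfrac12\pi_{d,k}w_{d-k},
        \qquad
        \Prob(\mathcal E_A\mid K_d=k)=w_{d-k}.
\]
By symmetry the same holds for $\mathcal E_B$ (the mirror event), with the same $w_{d-k}$. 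Since $\mathcal B_d=\mathcal E_A\sqcup\mathcal E_B$, we get $\Prob(\mathcal B_d,K_d=k)=\pi_{d,k}w_{d-k}$, so the conditional laws of $K_d$ given $\mathcal E_A$ and given $\mathcal B_d$ coincide exactly. It therefore suffices to treat $\mathcal E_A$.

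For any bounded or polynomially growing $f\ge0$, I will write
\[
        \E[f(K_d/\sqrt d)\mid\mathcal E_A]=\frac{\sum_k \pi_{d,k}w_{d-k}f(k/\sqrt d)}{\sum_k\pi_{d,k}w_{d-k}}.
\]
Split at $B_d=R\sqrt{d\log d}$ as in the proof of Theorem~\ref{thm:main}. On $k\le B_d$, Theorem~\ref{thm:lead-excursion} gives $w_{d-k}=\frac2d(1+\varepsilon_d)$ with $\sup_{k\le B_d}|\varepsilon_d|\to0$. The denominator is $\frac2d(1+o(1))$, as already shown in the main theorem. In the numerator, the range $k\le B_d$ equals $\frac2d(1+o(1))\sum_{k\le B_d}\pi_{d,k}f(k/\sqrt d)$. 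For the range $k>B_d$, use $w\le1$ and $f(k/\sqrt d)\le C(1+d^{p/2})$ together with \eqref{eq:first-break-tail}: choosing $R$ large (depending on $p$) makes $d^{p/2}\,d^{-cR^2}=o(d^{-1})$, and $e^{-cd}d^{p/2}$ is negligible. The atom $K_d=d$ contributes nothing to the conditional expectation, and it contributes $\rho_d d^{p/2}=O(e^{-cd}d^{p/2})$ unconditionally. Hence
\[
        \E[f(K_d/\sqrt d)\mid\mathcal E_A]=\E f(K_d/\sqrt d)+o(1)
\]
as soon as the unconditional expectation converges. The same truncation shows that the unconditional sum over $k>B_d$ is $o(1)$.

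It remains to prove the unconditional statements. Distributional convergence is Lemma~\ref{lem:first-break}; with the display above applied to bounded continuous $f$, it yields the conditional convergence. For moments, convergence in distribution plus uniform integrability of $(K_d/\sqrt d)^p$ suffices. I get uniform integrability from \eqref{eq:first-break-rayleigh-tail}: $\Prob(K_d\ge M\sqrt d)\le Ce^{-cM^2}+e^{-cd}$ for $M\le \sqrt d$ (and $K_d\le d$ makes larger $M$ trivial). This gives $\E[(K_d/\sqrt d)^{p}\Ind_{K_d\ge M\sqrt d}]\le \int_M^\infty p x^{p-1}(Ce^{-cx^2}+e^{-cd}\Ind_{x\le\sqrt d})\,\dd x+M^pCe^{-cM^2}$, which tends to $0$ uniformly in $d$ as $M\to\infty$ because $e^{-cd}d^{p/2}\to0$. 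The limit is $\int_0^\infty x^p2xe^{-x^2}\dd x=\Gamma(1+p/2)$. With $p=1,2$ this gives $\E K_d\sim\frac{\sqrt\pi}2\sqrt d$ and $\E K_d^2\sim d$, hence $\operatorname{Var}K_d\sim(1-\pi/4)d$. Since the first two moments converge with nonvanishing limiting variance, the variance asymptotic follows, and the conditional versions follow in the same way.

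The main obstacle is the uniformity step, namely the need for $w_{d-k}$ to be $\frac2d(1+o(1))$ uniformly over the whole window $1\le k\le B_d$, including bounded $k$. Theorem~\ref{thm:lead-excursion} is stated uniformly for $s_0=o(d)$ with an explicit error $O(s_0/d+\log d/\sqrt d)$, and $B_d/d\to0$. So this step is handled exactly as in the proof of Theorem~\ref{thm:main}, with polynomial weights absorbed by enlarging $R$.
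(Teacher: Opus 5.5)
Your proposal is correct and follows essentially the paper's proof. It uses the same representation $\Prob(\mathcal E_A,K_d=k)=\tfrac12\pi_{d,k}w_{d-k}$, the same uniform estimate $w_{d-k}=\tfrac2d(1+o(1))$ for $k\le R\sqrt{d\log d}$ with $R$ enlarged according to $p$, uniform integrability from \eqref{eq:first-break-rayleigh-tail}, and symmetry for $\mathcal B_d$. Your direct comparison $\E[f(K_d/\sqrt d)\mid\mathcal E_A]=\E f(K_d/\sqrt d)+o(1)$ for nonnegative $f$ just spares the paper's extra $M\sqrt d$ truncation.

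One harmless slip: $\Prob(\mathcal E_A\mid K_d=k)=\tfrac12 w_{d-k}$, because the break favours $A$ only half the time. It is $\Prob(\mathcal B_d\mid K_d=k)$ that equals $w_{d-k}$, and since only the ratio enters your argument, nothing downstream is affected.
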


\begin{proof}
The unconditional weak convergence is Lemma~\ref{lem:first-break}.  We first
upgrade it to convergence of fixed moments.  The Rayleigh tail bound
\eqref{eq:first-break-rayleigh-tail}, applied on dyadic annuli, implies
that for every fixed $p>0$ there are constants $C_p,c_p>0$ such that, for all
$M\ge1$ and all sufficiently large $d$,
\begin{equation}\label{eq:moment-tail-unconditional}
        \sum_{k\ge M\sqrt d}\left(\frac{k}{\sqrt d}\right)^p\pi_{d,k}
        \le C_p e^{-c_pM^2}+O(d^{p/2}e^{-cd}).
\end{equation}
The exceptional atom at $K_d=d$ has mass $O(e^{-cd})$, so its contribution to
the $p$th scaled moment is also $O(d^{p/2}e^{-cd})$.  Thus
$(K_d/\sqrt d)^p$ is uniformly integrable.  Since
\[
        \int_0^\infty x^p\,2xe^{-x^2}\,\dd x
        =\Gamma\left(1+\frac p2\right),
\]
the unconditional moment convergence follows.

It remains to prove the conditional statements.  From the tie-skeleton formula,
\[
        \Prob(K_d=k,\mathcal E_A)
        =\frac12\pi_{d,k}w_{d-k},
        \qquad 1\le k\le d-1,
\]
where $w_{d-k}=U_d(k,1)$.  Let
\[
        L_d:=R\sqrt{d\log d},
\]
where $R$ is a fixed constant, chosen large enough when needed.  Uniformly for
$1\le k\le L_d$, Theorem~\ref{thm:lead-excursion} gives
\[
        w_{d-k}
        =\frac{2}{d-k+1}
        \left(1+O\left(\frac{L_d}{d}+\frac{\log d}{\sqrt d}\right)\right)
        =\frac{2}{d}(1+o(1)).
\]
Also $E_d\sim d^{-1}$.  Therefore, for every bounded continuous function $f$,
\begin{align*}
        \E\left[
        f\left(\frac{K_d}{\sqrt d}\right)
        \,\middle|\,\mathcal E_A\right]
        &=
        \frac{1}{E_d}\sum_{k=1}^{d-1}
        f\left(\frac{k}{\sqrt d}\right)
        \frac12\pi_{d,k}w_{d-k}  \\
        &=
        \sum_{k\le L_d}
        f\left(\frac{k}{\sqrt d}\right)\pi_{d,k}+o(1).
\end{align*}
The contribution from $k>L_d$ is $o(1)$ by \eqref{eq:first-break-tail}, after
choosing $R$ large enough, because $0\le w_{d-k}\le1$ and $E_d\sim d^{-1}$.
Lemma~\ref{lem:first-break} now gives
\[
        \E\left[
        f\left(\frac{K_d}{\sqrt d}\right)
        \,\middle|\,\mathcal E_A\right]
        \longrightarrow
        \int_0^\infty f(x)2xe^{-x^2}\,\dd x .
\]
This proves the conditional Rayleigh law.

We next prove conditional moment convergence.  Fix $p>0$ and choose $R$ so
large that
\[
        d^{1+p/2}\sum_{k>L_d}\pi_{d,k}\longrightarrow0,
\]
which is possible by \eqref{eq:first-break-tail}.  Since $0\le w_{d-k}\le1$,
$E_d\sim d^{-1}$, and $(k/\sqrt d)^p\le d^{p/2}$, the contribution of
$k>L_d$ to
\[
        \E\left[\left(\frac{K_d}{\sqrt d}\right)^p
        \,\middle|\,\mathcal E_A\right]
\]
is $o(1)$.  On $k\le L_d$ the weight $w_{d-k}$ is $2d^{-1}(1+o(1))$
uniformly.  Hence, for each fixed $M\ge1$,
\[
\begin{aligned}
        \E\left[\left(\frac{K_d}{\sqrt d}\right)^p
        \Ind_{\{K_d\le M\sqrt d\}}
        \,\middle|\,\mathcal E_A\right]
        &=
        \sum_{k\le M\sqrt d}\left(\frac{k}{\sqrt d}\right)^p\pi_{d,k}+o(1).
\end{aligned}
\]
Letting $d\to\infty$ and then $M\to\infty$, and using
\eqref{eq:moment-tail-unconditional} for the remaining intermediate tail
$M\sqrt d<k\le L_d$, gives
\[
        \E\left[\left(\frac{K_d}{\sqrt d}\right)^p
        \,\middle|\,\mathcal E_A\right]
        \longrightarrow
        \Gamma\left(1+\frac p2\right).
\]

Finally,
\[
        \mathcal B_d=\mathcal E_A\sqcup\mathcal E_B,
\]
and the two events are exchanged by symmetry.  The conditional law of $K_d$
given $\mathcal E_B$ is therefore the same as its conditional law given
$\mathcal E_A$.  Hence the same conclusions hold conditional on
$\mathcal B_d$.

The displayed expectation and variance asymptotics follow from the moment
statements with $p=1$ and $p=2$, since
\[
        \E X=\frac{\sqrt\pi}{2},
        \qquad
        \E X^2=1.
\]
\end{proof}

\section{Numerical check from the exact recursion}

The proof above is asymptotic, but the finite recursion \eqref{eq:U-recursion} also gives a simple deterministic numerical check.  Evaluating \eqref{eq:U-recursion} backwards over the triangular state space gives the following values; no simulation is involved.  For reproducibility: impose the displayed boundary values, fill the triangular array in decreasing $s$ and decreasing $g$ using \eqref{eq:U-recursion}, and return $b_d=2U_d(0,0)$.
\[
\begin{array}{c|c|c}
 d & b_d & d b_d \\
\hline
 20   & 0.1534023902 & 3.068047804 \\
 50   & 0.0571300559 & 2.856502794 \\
 100  & 0.0268231002 & 2.682310024 \\
 200  & 0.0126463314 & 2.529266273 \\
 500  & 0.0047395234 & 2.369761723 \\
 1000 & 0.0022790277 & 2.279027674 \\
 2000 & 0.0011046906 & 2.209381159
\end{array}
\]
The convergence is slow, as expected from the proof, and is consistent with the available relative error scale $O((\log d)/\sqrt d)$.  The values are nevertheless consistent with the limit $d b_d\to2$.

\section{Remarks on the flux mechanism}

In the terminology introduced in the introduction, the proof is a terminal-condition flux calculation in three layers.

\begin{enumerate}[label=(\roman*)]
\item The tie boundary $g=0$ is not treated as an ordinary interior boundary.  It is a renewal set.

\item The first one-sided tie-break has explicit incoming flux
\[
        \pi_{d,k}
        =
        \left(\prod_{r=1}^{k-1}\frac{d-r}{d+r}\right)\frac{2k}{d+k},
\]
which, after scaling by $d^{1/2}$, converges to the Rayleigh-type density $2x e^{-x^2}$ on $x>0$.

\item After the tie break, the lead excursion is governed to first order by the Catalan comparison harmonic
\[
        H(s,g)=\frac{g+1}{d-s+1}.
\]
The exact simultaneous-round model makes $H$ slightly subharmonic in this sense; the dyadic Green estimate proves that the accumulated defect is negligible uniformly throughout the early first-break regime.
\end{enumerate}

Thus Myers--Wilf's finite decomposition into tails, frames, and ribbons is replaced asymptotically by a renewal decomposition at the tie set followed by a Catalan lead-excursion calculation.  The word ``flux'' is meant only to emphasize that the leading contribution is the entrance mass through the first one-sided tie-break boundary, not to add a separate assumption to the probabilistic proof.

\section{Further directions}

The proof suggests a more general transfer principle for ballot-type
competition problems.  In the present model the calculation separates into
two pieces: an entrance law through the tie boundary, and a post-entrance
survival probability in a shrinking wedge.  The first piece is the renewal
law of the first one-sided tie-break; the second is controlled by the Catalan
comparison harmonic, with a Green estimate showing that the defect of this
harmonic in the exact chain is negligible.

A natural general problem is to formulate a renewal--Catalan transfer theorem
for triangular families of absorbing Markov chains in wedges.  Such a theorem
would assume that the tie boundary is a renewal set, that the first one-sided
entrance occurs at a scale $a_d=o(d)$ with a tight entrance lead, that an
appropriate Catalan or gambler's-ruin harmonic has Green-negligible defect,
and that terminal ties are negligible.  Under these hypotheses, the ballot
probability should be given asymptotically by the entrance flux averaged
against the comparison harmonic.  In the present problem, since $b_d=2E_d$,
this principle reduces to
\[
        b_d \sim
        \sum_k \pi_{d,k}\frac{2}{d-k+1}
        \sim \frac{2}{d}.
\]
The proof above may be viewed as one concrete instance of this transfer
principle.  It would also be natural to seek second-order asymptotics for
$b_d$, since the next terms should separate the Rayleigh entrance correction
from the accumulated Green defect of the Catalan comparison harmonic.

Several further limit questions are suggested by this decomposition.  The
conditional first-break law identifies the entrance scale of successful paths,
but not their subsequent shape.  A natural next problem is to prove a path-level
version of the renewal--Catalan principle: after the first one-sided tie-break
and conditional on eventual ballot success, the lead process should be
approximated by a Doob transform associated with the Catalan comparison
harmonic.  A related terminal question is to identify the law of the loser's
residual number of missing coupons when the winner completes.  These questions
would require stronger path-space estimates than the Green estimate used here,
which controls hitting probabilities but not the full conditioned trajectory.

Another direction is to extend the argument to non-uniform coupon
probabilities.  In that setting the pair of collection counts no longer forms
a closed Markov chain, so the entrance law should depend on residual coupon
weights rather than only on the number of collected types.  A successful
extension would require replacing the scalar age coordinate by an appropriate
hazard or residual-mass coordinate, and then proving an analogue of the
Green-negligibility estimate for the corresponding comparison harmonic.


\begin{thebibliography}{9}

\bibitem{Feller}
William Feller,
\emph{An Introduction to Probability Theory and Its Applications, Vol. I},
3rd ed., John Wiley \& Sons, New York, 1968.

\bibitem{GV}
Ira Gessel and G\'erard Viennot,
Binomial determinants, paths, and hook length formulae,
\emph{Advances in Mathematics} \textbf{58} (1985), no.~3, 300--321.
DOI: \href{https://doi.org/10.1016/0001-8708(85)90121-5}{10.1016/0001-8708(85)90121-5}.

\bibitem{MW}
Amy N. Myers and Herbert S. Wilf,
Some new aspects of the coupon-collector's problem,
\emph{SIAM Journal on Discrete Mathematics} \textbf{17} (2003), no.~1, 1--17.
DOI: \href{https://doi.org/10.1137/S0895480102403076}{10.1137/S0895480102403076}; arXiv:math/0304229.

\end{thebibliography}
\end{document}